\newtheorem{tm}{Theorem}
\newtheorem{defi}{Definition}
\newtheorem{rem}{Remark}
\newtheorem{rems}{Remarks}
\newtheorem{lm}{Lemma}
\newtheorem{ex}{Example}
\newtheorem{nota}{Notation}
\newtheorem{op}{Open problem}
\begin{document}

\title{Univariate polynomials and the contractibility of certain sets}
\author{Vladimir Petrov Kostov}
\address{Universit\'e C\^ote d’Azur, CNRS, LJAD, France} 
\email{vladimir.kostov@unice.fr}

\begin{abstract}
  We consider the set $\Pi ^*_d$ of monic polynomials
  $Q_d=x^d+\sum _{j=0}^{d-1}a_jx^j$, $x\in \mathbb{R}$, $a_j\in \mathbb{R}^*$,
  having $d$ distinct real roots, and its subsets defined by fixing the signs
  of the coefficients $a_j$. We show that for every choice of these signs,
  the corresponding subset is
  non-empty and contractible. A similar result holds true in the cases of
  polynomials $Q_d$ of even degree $d$ and having no real roots or
  of odd degree and having exactly one real root. For even $d$ and when $Q_d$
  has exactly two real roots which are of opposite signs, the subset is
  contractible.
  For even $d$ and when $Q_d$ has two positive (resp. two negative) roots,
  the subset is contractible or empty. It is empty
  exactly when the constant term is positive, among the other even coefficients
  there is at least one which is negative, and all odd coefficients are
  positive (resp. negative).\\ 

  {\bf Key words:} real polynomial in one variable; hyperbolic polynomial;
  Descartes' rule of signs\\ 

{\bf AMS classification:} 26C10; 30C15
\end{abstract}
\maketitle

\section{Introduction}

In the present paper we consider the general family of real monic
univariate polynomials $Q_d=x^d+\sum _{j=0}^{d-1}a_jx^j$. It is a classical fact
that the subsets of $\mathbb{R}^d\cong Oa_0\ldots a_{d-1}$ of values of the
coefficients $a_j$ for which the polynomial $Q_d$ has one and the same number
of distinct real roots are contractible open sets. These sets are the
$[d/2]+1$ open
parts of $R_{1,d}:=\mathbb{R}^d\setminus \Delta _d$, where $\Delta _d$
is the {\em discriminant set} corresponding to the family $Q_d$.

\begin{rems}\label{remsdiscrim}
  {\rm (1) One defines the discriminant set by the two conditions:

    (a) The set $\Delta ^1_d$ is defined by the equality Res$(Q_d,Q_d',x)=0$,
    where Res$(Q_d,Q_d'$, $x)$ is the
resultant of the polynomials $Q_d$ and $Q_d'$, i.~e. the determinant of the
corresponding Sylvester matrix.

(b) One sets $\Delta _d:=\Delta ^1_d\setminus \Delta ^2_d$, where  
$\Delta ^2_d$
is the set of values of
the coefficients $a_j$ for which there is a multiple complex conjugate
pair of roots of $Q_d$ and no multiple real root.

One observes that dim$(\Delta _d)=$dim$(\Delta ^1_d)=d-1$ and
dim$(\Delta ^2_d)=d-2$. Thus $\Delta _d$ is the set of values of
$(a_0,\ldots ,a_{d-1})$ for which the polynomial $Q_d$ has a multiple real root.

(2) The discriminant set is invariant under the one-parameter group of
  quasi-homogeneous dilatations $a_j\mapsto u^{d-j}a_j$, $j=0$, $\ldots ,~d$.}
  \end{rems}

\begin{rem}
{\rm If one considers the subsets of $\mathbb{R}^d$ for which the polynomial
$Q_d$ has one and the same numbers of positive and negative roots (all of
them distinct) and no zero roots, then these sets will be the open parts
of the set $R_{2,d}:=\mathbb{R}^d\setminus (\Delta _d\cup \{ a_0=0\} )$.
To prove their
connectedness one can consider the mapping ``roots $\mapsto$ coefficients''.
Given two sets of nonzero roots with the same numbers of negative and
positive roots
(in both cases they are all simple) one can continuously deform the first set
into the second one while keeping the absence of zero roots, the
numbers of positive and negative roots and
their simplicity throughout the deformation. The existence of this deformation
implies the existence of a continuous path in the set $R_{2,d}$ connecting the
two polynomials $Q_d$ with the two sets of roots.}
\end{rem}

In the present text we focus on polynomials without vanishing coefficients
and we consider the set

$$R_{3,d}:=\mathbb{R}^d\setminus (\Delta _d\cup \{ a_0=0\}
\cup \{ a_1=0\} \cup \cdots
\cup \{ a_{d-1}=0\} )~.$$
We discuss the question when its subsets corresponding to given
numbers of positive and negative roots of $Q_d$ and to given signs of its
coefficients are contractible.

\begin{nota}\label{notaelliptic}
  {\rm (1) We denote by $\sigma$ the $d$-tuple
    $($sign$(a_0), \ldots$, sign$(a_{d-1}))$, where sign$(a_j)=+$ or~$-$,
    by $\mathcal{E}_d$ the set 
  of {\em elliptic} polynomials $Q_d$, i.~e.
  polynomials with no real roots (hence $d$ is even and $a_0>0$), and by
  $\mathcal{E}_d(\sigma )\subset \mathcal{E}_d$ the set consisting of elliptic
  polynomials $Q_d$ with signs of the coefficients defined by
  $\sigma$.

  (2) For $d$ odd and for a given $d$-tuple $\sigma$, we denote by
  $\mathcal{F}_d(\sigma )$ the set of monic real polynomials $Q_d$
  with signs of their coefficients defined by the
  $d$-tuple $\sigma$ and having exactly one real (and simple) root.

  (3) For $d$ even, we denote by $\mathcal{G}_d(\sigma )$
  the set of polynomials $Q_d$
  having signs of the coefficients defined by the $d$-tuple $\sigma$ and
  having exactly two simple real roots.}
\end{nota}

\begin{rem}\label{remelliptic}
  {\rm For an elliptic polynomial $Q_d$, one has $a_0>0$, because for
    $a_0<0$, there is at least one positive root. The sign of the
    real root of a polynomial
    of $\mathcal{F}_d(\sigma )$ is opposite to sign$(a_0)$. A polynomial from
    $\mathcal{G}_d(\sigma )$ has two roots of same (resp. of opposite) signs
  if $a_0>0$ (resp. if $a_0<0$).}
  \end{rem}

In order to formulate our first result we need the following definition:

\begin{defi}\label{defispecial}
  {\rm (1) For $d$ even and $a_0<0$, we set
    $\mathcal{G}_{d,(1,1)}(\sigma ):=\mathcal{G}_d(\sigma )$. For $d$ even and 
    $a_0>0$, we set
    $\mathcal{G}_d(\sigma ):=\mathcal{G}_{d,(2,0)}(\sigma )
    \cup \mathcal{G}_{d,(0,2)}(\sigma )$,
    where for $Q_d\in \mathcal{G}_{d,(2,0)}$ (resp. $Q_d\in \mathcal{G}_{d,(0,2)}$),
    $Q_d$ has two positive (resp. two negative) distinct roots and
    no other real roots. Clearly $\mathcal{G}_{d,(2,0)}(\sigma )
    \cap \mathcal{G}_{d,(0,2)}(\sigma )=\emptyset$. 

    (2) For $d$ even, we define two special cases according to the signs of
    the coefficients of $Q_d$
    and the quantities of its positive or negative real roots:
    \vspace{1mm}
    
    {\em Case 1).} The constant term and all coefficients
    of monomials of odd degrees are positive, there is at least one coefficient
    of even degree which is negative, and $Q_d$ has $2$ positive
    and no negative roots.
    \vspace{1mm}

{\em Case 2).} The constant term is positive, all coefficients
    of monomials of odd degrees are negative, there is at least one coefficient
    of even degree which is negative, and $Q_d$ has $2$ negative
    and no positive roots.
    \vspace{1mm}

    Cases 1) and 2) are exchanged when one performs the
    change of variable $x\mapsto -x$.}
\end{defi}

Our first result concerns real polynomials
with not more than $2$ real roots:

\begin{tm}\label{tm2roots}
(1) For $d$ even and for each $d$-tuple $\sigma$, the subset
  $\mathcal{E}_d(\sigma )\subset \mathcal{E}_d$ is non-empty and convex hence
  contractible.

  (2) For $d$ odd and for each $d$-tuple $\sigma$, the set
  $\mathcal{F}_d(\sigma )$ is non-empty and contractible.

  (3) For $d$ even and for each $d$-tuple $\sigma$ with $a_0<0$, the set
  $\mathcal{G}_{d,(1,1)}(\sigma )$ is contractible. For $d$ even and for each
  $d$-tuple $\sigma$ with $a_0>0$, each set
  $\mathcal{G}_{d,(2,0)}(\sigma )$ (resp. $\mathcal{G}_{d,(0,2)}(\sigma )$)
  is contractible or empty. It is empty
  exactly in Case 1) (resp. in Case~2)). 
\end{tm}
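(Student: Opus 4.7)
The plan is to handle the three parts in turn, using in each case a decomposition $Q_d = \prod_i(x-r_i)\cdot E(x)$ that peels off the real roots and leaves an elliptic factor of lower degree.

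Part (1) is the warm-up. If $Q_1,Q_2\in\mathcal{E}_d(\sigma)$, then both are strictly positive on $\mathbb{R}$ (even degree, no real root, $a_0>0$), so any convex combination is strictly positive and monic and has each lower coefficient of the prescribed sign $\sigma_j$; thus $\mathcal{E}_d(\sigma)$ is convex. For non-emptiness I would take $Q_\sigma(x)=x^d+\sigma_0+\epsilon\sum_{j=1}^{d-1}\sigma_j x^j$, and split the estimates into $|x|\le 1$ and $|x|\ge 1$ to see that $Q_\sigma>0$ on $\mathbb{R}$ for $\epsilon$ small.

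For part (2), the central tool is the homeomorphism $\Phi:\mathbb{R}^*\times\mathcal{E}_{d-1}\to\mathcal{F}_d$, $(r,E)\mapsto(x-r)E(x)$, well-defined because the unique real root $r$ of $Q\in\mathcal{F}_d$ is simple and then $E=Q/(x-r)$ is elliptic. Under $\Phi$ the set $\mathcal{F}_d(\sigma)$ pulls back to the subset of $(r,E)$ with $\mathrm{sign}(r)=-\sigma_0$ and coefficients $a_j=b_{j-1}-rb_j$ of sign $\sigma_j$. These constraints are linear in $(b_0,\dots,b_{d-2})$ for fixed $r$ and linear in $r$ for fixed $E$, so the $E$-slices (intersection of the convex set $\mathcal{E}_{d-1}$ with half-spaces) are convex and the $r$-slices are open intervals. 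I would show that the projection $\mathcal{F}_d(\sigma)\to\mathbb{R}$ has as image an open interval $I(\sigma)$ admitting a continuous section $E^*(r)$; the resulting ``convex-fiber bundle over an interval'' then contracts via a two-stage homotopy (first push each $E$ to $E^*(r)$ along its convex slice, then slide $r$ along $I(\sigma)$ to a target). Non-emptiness uses the same family $Q_\sigma$: for $d$ odd and $\epsilon$ small, $Q_\sigma$ is a perturbation of $x^d+\sigma_0$, which has exactly one simple real root near $-\sigma_0$, so $Q_\sigma$ too has exactly one real root by continuity of roots in coefficients.

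For part (3), the case $\mathcal{G}_{d,(1,1)}(\sigma)$ with $a_0<0$ uses the decomposition $Q=(x-r_1)(x-r_2)E$ with $r_1>0>r_2$ and $E\in\mathcal{E}_{d-2}$; the fiberwise-convexity argument of part (2) adapts with minimal changes because the two real roots range over disjoint contractible half-lines. The sets $\mathcal{G}_{d,(2,0)}(\sigma)$ and $\mathcal{G}_{d,(0,2)}(\sigma)$ are handled the same way when non-empty. For the emptiness in Case 1), writing $Q=(x-r_1)(x-r_2)\prod_k(x^2-2\alpha_k x+\alpha_k^2+\beta_k^2)$ with $r_1,r_2>0$ and $\beta_k>0$ and exploiting Vieta's identities, the positivity of all odd-degree coefficients (in particular $a_{d-1}>0$ and $a_1>0$) forces incompatible bounds on $\sum_k\alpha_k$. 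The contradiction is already concrete for $d=4$: $a_3>0$ forces $\alpha<-(r_1+r_2)/2$ while $a_1>0$ forces $|\alpha|<2r_1r_2/(r_1+r_2)$, and these are incompatible in view of $(r_1+r_2)^2\ge 4r_1r_2$. Case 2) follows from Case 1) via $x\mapsto -x$.

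The main obstacle is the contractibility step in part (2) and its analogues in part (3): although the $E$-slices are convex, one must verify that $I(\sigma)$ is a connected open interval and construct a continuous section $E^*(r)$ over it, for the fiber-bundle picture to yield contractibility of the total space. If this section approach balks, a fallback is to build the contracting homotopy more directly, moving $Q=(x-r)E$ in two stages (first inside its convex $E$-slice, then along a path in $r$ with $E$ adjusted to preserve signs). A second delicate point is promoting the $d=4$ Vieta contradiction in Case 1) to arbitrary even $d$, which requires careful tracking of the joint contributions of all $(d-2)/2$ complex-conjugate pairs to $a_{d-1}$ and $a_1$ via the Newton identities.
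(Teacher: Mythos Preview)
Your architecture --- factor off the real roots and exploit that, with the root positions fixed, the remaining set is convex (convex combinations preserve both the sign pattern and the positivity of the elliptic cofactor) --- matches the paper, and Part~(1) is essentially identical. For Part~(2) the paper bypasses your ``main obstacle'' with a one-line trick you are missing: the rescaling $Q_d\mapsto|\xi|^dQ_d(x/|\xi|)$ moves the unique real root to $\pm1$ while preserving all coefficient signs, so $\mathcal{F}_d(\sigma)\cong\mathcal{F}_d^0(\sigma)\times\mathbb{R}_+^*$ with $\mathcal{F}_d^0(\sigma)$ the single convex slice at root $\pm1$, and no section over a variable $r$ is needed. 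The same rescaling in Part~(3) pins the root of largest modulus at $\pm1$, reducing the base of root positions from two dimensions to one interval.

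The genuine gap is your claim that Part~(3) needs only ``minimal changes''. Even after the rescaling, one must still produce, for every position $\xi\in(0,1)$ of the remaining root, a polynomial in the corresponding convex slice, continuously in $\xi$; this section construction is the bulk of the paper's proof. The paper builds a two-term skeleton $R=x^d-Ax^m\pm B$ with $R(\xi)=R(\pm1)=0$, adds a perturbation $\varepsilon(\xi)\Psi$ to realize the remaining coefficient signs, and then devotes two lengthy lemmas to choosing $\varepsilon(\xi)$ (damped like $\xi^{m+1}(1-\xi)^3$, resp.\ $\eta^m(1-\eta)^2$) so that $R+\varepsilon\Psi$ acquires no extra real roots uniformly as $\xi\to0^+$ and $\xi\to1^-$. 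For $\mathcal{G}_{d,(1,1)}(\sigma)$ there is a further structural issue your sketch misses entirely: the set is not a convex-fibered bundle over a single interval but must first be split according to which real root has larger modulus into pieces $\mathcal{K}=\{\xi>\eta\}$, $\mathcal{L}=\{\xi<\eta\}$, $\mathcal{M}=\{\xi=\eta\}$; a separate lemma reads off from the signs of the odd-degree coefficients which of these pieces are nonempty, each nonempty piece is contracted by the method above, and when all three are present they are glued through a ball centered in $\mathcal{M}$. Your ``disjoint contractible half-lines'' picture does not see this. Finally, the paper does not prove the emptiness in Case~1) directly but cites it from \cite{FoKoSh}; your $d=4$ Vieta computation is correct, but the paper makes no attempt at a self-contained general argument along those lines.
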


The theorem is proved in Section~\ref{secprtm2roots}. The next result of this
paper concerns {\em hyperbolic polynomials},
i.~e. polynomials
$Q_d$ with $d$ real roots counted with multiplicity.

\begin{nota}\label{notaPid}
  {\rm We denote by $\Pi _d$
the {\em hyperbolicity domain}, i.~e. the subset of $\mathbb{R}^d$ for which
the corresponding polynomial $Q_d$ is hyperbolic. The interior of $\Pi _d$
is the set of polynomials having $d$ distinct real roots and its border 
$\partial \Pi _d$ equals $\Delta _d\cap \Pi _d$. We set}

  $$\Pi _d^*:=\Pi _d\setminus (\Delta _d\cup \{ a_0=0\} \cup \{ a_1=0\}
  \cup \cdots \cup \{ a_{d-1}=0\} )~.$$
       {\rm Thus $\Pi _d^*$ is the set of monic degree $d$ univariate
         polynomials with $d$ distinct real roots and with all
         coefficients non-vanishing. We denote by $\Pi _d^k$ and $\Pi _d^{*k}$
         the projections of the sets $\Pi _d$ and $\Pi _d^*$ 
         in the space $Oa_{d-k}\ldots a_{d-1}$ (hence $\Pi _d^d=\Pi _d$ and
         $\Pi _d^{*d}=\Pi _d^*$),
         by $\partial \Pi _d^k$ the border of $\Pi _d^k$ and by $pos$ and $neg$
         the numbers of positive and
         negative roots of a polynomial $Q_d$ having no vanishing coefficients.

      We set $a:=(a_0,a_1,\ldots ,a_{d-1})$, $a':=(a_1,\ldots ,a_{d-1})$, 
  $a'':=(a_2,\ldots ,a_{d-1})$ and $a^{(k)}:=(a_k,\ldots ,a_{d-1})$.
  In what follows we
  use the same notation for functions and for their graphs.}
\end{nota}

\begin{rems}\label{remsDescartes}
  {\rm (1) For a hyperbolic polynomial with no vanishing coefficients,
    the $d$-tuple $\sigma$ defines the
    numbers $pos$ and $neg$. Indeed, by Descartes' rule of signs a real
    univariate polynomial $Q_d$ with $c$ sign changes in its sequence of
    coefficients
    has $\leq c$ positive roots and the difference $c-pos$ is even,
    see \cite{VJ} and~\cite{Fo}. When
    applying this rule to the polynomial $Q(-x)$ one finds that the number
    $p$ of sign preservations is $\geq neg$ and the difference $p-neg$ is even.
    For a hyperbolic polynomial one has $pos+neg=c+p=d$, so in this
    case $c=pos$ and $p=neg$.

    (2) By Rolle's theorem the non-constant derivatives of a hyperbolic
    polynomial (resp. of a polynomial of the set $\Pi _d^*$)
    are also hyperbolic (resp. are hyperbolic with all roots non-zero and
    simple).
    Hence for two hyperbolic polynomials of
    the same degree and with
    the same signs of their respective coefficients, their derivatives of the
    same orders have one and the same numbers of positive and negative roots.}
  \end{rems}

Our next result is the following theorem
(proved in Section~\ref{secprtmconnecthyp}):

\begin{tm}\label{tmconnecthyp}
For each $d$-tuple $\sigma$, there exists exactly one
  open component of the set $\Pi _d^*$ the polynomials $Q_d$ from which have
  exactly $pos$ positive simple and $neg$ negative simple roots and have
  signs of the coefficients as defined by~$\sigma$. This component is
  contractible. 
\end{tm}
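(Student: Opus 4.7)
The plan is to identify the open dense stratum of $\Pi _d$ of polynomials with $d$ distinct simple real roots with the convex open set $U_d:=\{(x_1,\ldots ,x_d)\in \mathbb{R}^d:x_1<\cdots <x_d\}$ via the Vieta map, and to analyse the resulting sign-pattern decomposition of $\Pi _d^*$. Fixing the pair $(pos,neg)$ restricts $U_d$ to the convex open subset $U_{pos,neg}$ cut out by $x_{neg}<0<x_{neg+1}$, and removing the hypersurfaces $\{a_j=0\}$ (the vanishing loci of elementary symmetric functions of the roots) partitions $U_{pos,neg}$ into open pieces labelled by admissible sign patterns $\sigma $. Since Vieta is a diffeomorphism onto $\Pi _d\setminus \Delta _d$, the theorem reduces to showing that each such piece is non-empty, connected and contractible.

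I would proceed by induction on $d$ using the projection $\pi :Q_d\mapsto Q_d'/d$ into $\Pi _{d-1}^*$, well-defined by Remark~\ref{remsDescartes}(2). Under $\pi $ the sign pattern $\sigma =(\sigma _0,\sigma _1,\ldots ,\sigma _{d-1})$ restricts to $\sigma '=(\sigma _1,\ldots ,\sigma _{d-1})$ on $P:=Q_d'/d$, and by the inductive hypothesis the component $\mathcal{C}(\sigma ')\subset \Pi _{d-1}^*$ corresponding to $\sigma '$ is contractible. For $P\in \mathcal{C}(\sigma ')$ with critical points $y_1<\cdots <y_{d-1}$, the antiderivative $Q_d(x)=a_0+d\int _0^x P(t)\,dt$ has $d$ distinct simple real roots exactly when $a_0$ lies in a (possibly empty) open interval $I(P)\subset \mathbb{R}$, specified by the required strict sign alternation of the critical values $a_0+d\int _0^{y_i}P$. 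Intersecting with $\mathrm{sign}(a_0)=\sigma _0$ cuts $I(P)$ down to an open subinterval $J(P)$, so the candidate component satisfies
\[\mathcal{C}(\sigma )\;\cong \;\{(P,a_0):P\in \mathcal{C}(\sigma '),\ a_0\in J(P)\},\]
a continuous interval bundle over $\mathcal{B}(\sigma ):=\{P\in \mathcal{C}(\sigma '):J(P)\neq \emptyset \}$ which deformation-retracts onto $\mathcal{B}(\sigma )$ by the midpoint-of-fibre section. Hence $\mathcal{C}(\sigma )\simeq \mathcal{B}(\sigma )$, and the inductive problem reduces to showing that $\mathcal{B}(\sigma )$ is non-empty and contractible.

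The hard part will be the contractibility of $\mathcal{B}(\sigma )$, because $\mathcal{B}(\sigma )$ is in general a proper semi-algebraic open subset of $\mathcal{C}(\sigma ')$: a hyperbolic $P$ need not admit any antiderivative in $\Pi _d^*$ matching $\sigma _0$, since $I(P)$ may be empty or lie entirely on the wrong side of $0$. Non-emptiness of $\mathcal{B}(\sigma )$ (giving existence of $\mathcal{C}(\sigma )$) I would settle by a direct construction of a hyperbolic polynomial realising any $\sigma $ with $pos$ sign changes and $pos+neg=d$, choosing its positive and negative roots at widely separated magnitudes so that each $a_j$ inherits its sign from a single dominant monomial in the elementary symmetric functions. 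For contractibility I would exploit the quasi-homogeneous dilatation $a_j\mapsto u^{d-j}a_j$ from Remark~\ref{remsdiscrim}(2), which preserves $\sigma '$ and scales the endpoints of $I(P)$ by $u^d$, so that $\mathcal{B}(\sigma )$ is invariant under $\mathbb{R}_{>0}$; passing to a normalised slice reduces contractibility to a lower-dimensional open subset of the contractible set $\mathcal{C}(\sigma ')$, which one treats by a direct analysis of the boundary equation asserting that some extremal critical value vanishes. Uniqueness of the component in $\Pi _d^*$ is then automatic: two points of $\mathcal{C}(\sigma )$ are joined by first moving inside the fibre to a reference $a_0$ and then moving $P$ inside the connected set $\mathcal{B}(\sigma )$.
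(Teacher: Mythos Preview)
Your setup via the Vi\`ete map and the derivative fibration $\pi : Q_d \mapsto Q_d'/d$ is natural and correct up to the point where you identify the fibres $J(P)$ as open intervals and deduce that the candidate component $\mathcal{C}(\sigma)$ deformation-retracts onto the base $\mathcal{B}(\sigma) = \{P \in \mathcal{C}(\sigma') : J(P) \neq \emptyset\}$. You also correctly observe that $\mathcal{B}(\sigma)$ is in general a \emph{proper} open subset of $\mathcal{C}(\sigma')$: for instance, with $d=3$ and $\sigma = (+,+,-)$, the polynomial $P = (x-1)(x-2)$ lies in $\mathcal{C}(\sigma')$ but its hyperbolicity interval $I(P)=(-5/2,-2)$ misses $\{a_0>0\}$ entirely, so $P\notin\mathcal{B}(\sigma)$.

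The genuine gap is that you do not prove $\mathcal{B}(\sigma)$ contractible. The quasi-homogeneous dilatation shows only that $\mathcal{B}(\sigma)$ is an $\mathbb{R}_{>0}$-cone, reducing to contractibility of a slice; but an open subset of a contractible set need not be contractible, and your ``direct analysis of the boundary equation asserting that some extremal critical value vanishes'' is a description of the problem, not a solution. That boundary is a rather complicated semi-algebraic hypersurface in $\mathcal{C}(\sigma')$, and nothing you have written rules out, say, a slice with nontrivial $\pi_1$. This is exactly the point where the paper supplies the missing idea: instead of analysing $\mathcal{B}(\sigma)$, it writes down the explicit one-parameter deformation
\[
Y_d(t) \;=\; \frac{Q_d + t\,x\,Q_d'}{1+td} \;=\; \sum_{j=0}^d \frac{1+tj}{1+td}\,a_j x^j, \qquad t \in [0,\infty),
\]
which visibly preserves the sign of every coefficient, sends $a_0\to 0$ as $t\to\infty$ (limit $xQ_d'/d$), and---this is the nontrivial check---keeps all $d$ real roots simple and nonzero throughout. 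The paper verifies this last point by locating a sign change of $Y_d(t)$ in each of the $d$ intervals cut out by the roots of $Q_d$, the roots of $Q_d'$, and the origin. This retracts the component directly onto a component of $\Pi_{d-1}^*$ and closes the induction without ever needing to understand the shape of~$\mathcal{B}(\sigma)$.
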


One can give more explicit information about the components of the
set $\Pi _d^*$. 
Denote by $M$ such a component defined after a $d$-tuple
$\sigma$ and by $M^k$ its projection
in the space $Oa_{d-k}\cdots a_{d-1}$. It is shown in~\cite{KoSe} (see
Proposition~1 therein) that $M$ is non-empty. In
Section~\ref{secprtmconnecthyp} we prove the
following statement:

\begin{tm}\label{tmconcrete}
  For $k\geq 3$, the set $M^k$ is the set of all points between the graphs
  $L^k_{\pm}$ of two
  continuous functions defined on $M^{k-1}$:

  $$M^k=\{ a^{(d-k)}\in \mathbb{R}^{d-k}~|~
  L^k_-(a^{(d-k+1)})<a_{d-k}<L^k_+(a^{(d-k+1)}),~a^{(d-k+1)}\in M^{k-1}\} ~.$$
  The functions $L^k_{\pm}$ can be extended
  to continuous functions defined on $\overline{M^{k-1}}$, 
  whose values might
  coincide (but this does not necessarily happen) only on $\partial M^{k-1}$.
\end{tm}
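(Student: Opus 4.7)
I plan to fix $a^{(d-k+1)} \in M^{k-1}$ and analyze the fiber $F := \{a_{d-k} \in \mathbb{R} : (a_{d-k}, a^{(d-k+1)}) \in M^k\}$, aiming to show that it is an open, bounded, connected subset of $\mathbb{R}$ --- hence an open interval $(L^k_-(a^{(d-k+1)}), L^k_+(a^{(d-k+1)}))$ --- with endpoints depending continuously on $a^{(d-k+1)}$. The key observation is that the coordinates $a_{d-k}, \ldots, a_{d-1}$ are, up to positive scalar factors, the coefficients of the $(d-k)$-th derivative $Q_d^{(d-k)}$, a monic polynomial of degree $k$. By Rolle's theorem (Remarks~\ref{remsDescartes}(2)), for $Q_d \in M$ the derivative $Q_d^{(d-k+1)}$ is hyperbolic with $k-1$ distinct nonzero roots $\beta_1 < \cdots < \beta_{k-1}$; these are precisely the critical points of $Q_d^{(d-k)}$, and they are determined entirely by $a^{(d-k+1)}$.

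Openness of $F$ is inherited from openness of $M^k$. For boundedness, write $Q_d^{(d-k)}(x) = R(x) + (d-k)!\,a_{d-k}$ with $R(0)=0$, so that $R$ depends only on $a^{(d-k+1)}$. Hyperbolicity of $Q_d^{(d-k)}$ with $k$ distinct real roots requires the critical values $R(\beta_i) + (d-k)!\,a_{d-k}$ to alternate strictly in sign across $i$. Since $k \geq 3$ gives at least two critical points, this produces a two-sided constraint $c_-(a^{(d-k+1)}) < a_{d-k} < c_+(a^{(d-k+1)})$, with $c_\pm$ coming from suitable maxima/minima of $-R(\beta_i)/(d-k)!$ over the \emph{max-type} and \emph{min-type} critical points. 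The further sign requirement $\mathrm{sign}(a_{d-k}) = \sigma_{d-k}$ may truncate this interval at $0$. Since the $\beta_i$ are simple roots of $Q_d^{(d-k+1)}$, they depend continuously on $a^{(d-k+1)}$; hence so do the $R(\beta_i)$ and the resulting bounds.

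The main obstacle is connectedness of $F$, equivalently the fact that every $a_{d-k}$ in the interval just described actually lifts to some $Q_d \in M$. I would approach this by a path-lifting argument: given two points of $F$ with lifts $Q^{(1)}, Q^{(2)} \in M$ sharing the same $a^{(d-k+1)}$, I construct a continuous path in $M$ joining them while holding $a_{d-k+1}, \ldots, a_{d-1}$ fixed throughout, so that its $a_{d-k}$-projection is a connected subset of $F$ containing both endpoints. Such an `$a^{(d-k+1)}$-preserving' path exists because the slice of $M$ over a prescribed $a^{(d-k+1)}$ is itself connected --- a claim one establishes by an inductive argument in the spirit of Theorem~\ref{tmconnecthyp}, successively integrating up from lower degrees and using that at each step the hyperbolicity interval for the new integration constant is automatically non-empty (from the sign alternation of the integrand between its consecutive roots), while compatibility with the next sign in $\sigma$ is ensured via non-emptiness of $M$ (\cite{KoSe}, Proposition~1). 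Once $F$ is identified as an open interval, its endpoints $L^k_\pm$ are continuous semialgebraic functions of $a^{(d-k+1)}$; they extend continuously to $\overline{M^{k-1}}$, and on the part of $\partial M^{k-1}$ where two $\beta_i$ merge the corresponding $R(\beta_i)$ collide, possibly forcing $L^k_- = L^k_+$, whereas at other boundary points (a single $\beta_i$ reaching zero or a coefficient vanishing) the bounds need not coincide.
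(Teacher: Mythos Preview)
Your openness and boundedness arguments for the fibre $F$ are fine, but the heart of the matter --- connectedness of $F$ --- is not established. You reduce it to the claim that the slice of $M$ over a fixed $a^{(d-k+1)}$ is connected, and propose to prove this by ``successively integrating up'' and invoking non-emptiness of $M$. This does not work as stated. Non-emptiness of $M$ guarantees only that \emph{some} chain of integration constants $(a_{d-k},a_{d-k-1},\ldots ,a_0)$ with the prescribed signs exists; it does not say that \emph{every} admissible $a_{d-k}$ in your interval can be continued to such a chain. Concretely, after fixing $a_{d-k}$ (so that $Q_d^{(d-k)}$ is hyperbolic with distinct nonzero roots), the interval of valid $a_{d-k-1}$ may lie entirely on the wrong side of~$0$ (take e.g.\ $P(x)=(x-1)(x-2)$: every hyperbolic antiderivative has negative constant term). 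So the ``extension'' step can fail, and your induction on the number of integrations, if set up so that at each stage you study the set of admissible next constants, ends up requiring precisely the interval property of $F$ you are trying to prove. The deformation of Theorem~\ref{tmconnecthyp} is of no help here either, since $a_j\mapsto \frac{1+tj}{1+td}\,a_j$ does not preserve~$a^{(d-k+1)}$.

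There is a second gap: even granting that each fibre is an open interval, continuity of the endpoint functions $L^k_{\pm}$ does not follow from openness and connectedness of $M^k$ alone, nor from semialgebraicity (which gives continuity only off a lower-dimensional set). An open connected planar set with interval vertical fibres can have a discontinuous upper-endpoint function; you need the specific structure of $M^k$ inside $\Pi_d^k$ (in particular the continuous bounds $H^k_{\pm}$ from Theorem~\ref{tmremind}) together with an analysis of what happens on $\partial M^k$.

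The paper avoids both difficulties by a different scheme: induction on $d$ rather than on $k$. Starting from a component $U$ of $\Pi_{d-1}^*$ (identified with $\Pi_d\cap\{a_0=0\}$), it \emph{constructs} the two components $U_{\pm}$ of $\Pi_d^*$ adjacent to $U$ explicitly, layer by layer, by enlarging sets $U_{1,\pm}\subset U_{2,\pm}\subset\cdots\subset U_{d-1,\pm}$. At each layer one studies the fibres $\tilde f_k$ and $f^{\diamond}_m$ of the projections $\pi^k$ and $\pi^m_*$ over boundary points of the current projection; a trichotomy on the sign of the relevant coordinate at the fibre's endpoints (both zero; one zero and one positive; one zero and one negative) dictates exactly how to extend, and the functions $L^k_{\pm}$ are produced explicitly as $H^k_{\pm}$ truncated by the hyperplanes $\{a_j=0\}$. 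Continuity of $L^k_{\pm}$ then comes for free from that of $H^k_{\pm}$. This constructive route sidesteps entirely the question of whether slices of $M$ are connected.
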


\begin{rem}
%

  {\rm Theorem~\ref{tmconnecthyp} can be 
    deduced from Theorem~\ref{tmconcrete} 
    (but we give in Section~\ref{secprtmconnecthyp} a direct proof which is
    short enough).
    Indeed, given a component $M$ of the set $\Pi _d^*$, 
one can successively contract it into its projections $M^{d-1}$, $M^{d-2}$,
$\ldots$, $M^2$. The latter is
one of the sets $\Pi ^{*2}_{d\pm \pm}$ defined in Example~\ref{exk2} which
are contractible.}
  \end{rem}

In Section~\ref{secknown} we remind some results which are used in the proof of
Theorem~\ref{tmconnecthyp}. In Section~\ref{secex} we introduce some notation
and we give examples concerning the sets $\Pi _d$ and $\Pi _d^*$ for
$d=1$, $2$ and~$3$. These examples are used in the proofs of
Theorems~\ref{tmconnecthyp} and~\ref{tmconcrete}. In 
Section~\ref{seccomments} we make comments on Theorems~\ref{tm2roots}, 
\ref{tmconnecthyp} and~\ref{tmconcrete} 
and we formulate open problems.

\section{Known results about the hyperbolicity domain\protect\label{secknown}}

Before proving Theorems~\ref{tm2roots}, \ref{tmconnecthyp} and~\ref{tmconcrete} 
we remind some results about the
set $\Pi _d$ which
are due to V.~I.~Arnold, A.~B.~Givental and the author, see \cite{Ar},
\cite{Gi} and \cite{KoProcRSE} or Chapter~2 of
\cite{Ko} and the references therein. 

\begin{nota}\label{notaViete}
  {\rm We denote by $K_d$ the simplicial angle
    $\{ x_1\geq x_2\geq \cdots \geq x_d\}\subset \mathbb{R}^d$ and by
    $\tilde{\mathcal{V}}$ the
    Vi\`ete mapping}
  $$\tilde{\mathcal{V}}:(x_1,\ldots ,x_d)\mapsto
  (\varphi _1,\ldots ,\varphi _d)~,~~~\, \,
  \varphi _j=\sum _{1\leq i_1<i_2<\cdots <i_j\leq d}x_{i_1}x_{i_2}\cdots x_{i_j}~.$$
          {\rm {\em Strata} of $K_d$ are denoted by their
            {\em multiplicity vectors}. E.~g. for $d=5$, the stratum of $K_5$
            defined by the multiplicity vector $(2,2,1)$ is the set
            $\{ x_1=x_2>x_3=x_4>x_5\}\subset \mathbb{R}^5$. The same notation
            is used for strata of $\Pi _d$ which is justified by parts (3) and
            (4) of Theorem~\ref{tmremind}.}
  \end{nota}

\begin{rem}\label{remstrata}
  {\rm The set $\Delta _d\cap \Pi _d=\Delta ^1_d\cap \Pi _d$ consists of points
    $a\in \Pi _d\subset \mathbb{R}^d$, for which the hyperbolic 
    polynomial $Q_d$ has at least one root of multiplicity $\geq 2$. That is
    why $\Pi _d\setminus \Delta _d=\Pi _d\setminus \Delta ^1_d=S_{1^d}$ is the
    stratum of $\Pi _d$ with multiplicity vector $1^d=(1,\ldots ,1)$ and}

    $$\Pi _d^*=S_{1^d}\setminus (\{ a_0=0\} \cup \cdots \cup \{ a_{d-1}=0\} )~.$$
  {\rm The strata of $\Pi _d^*$
    (they are all of dimension $d$,
    so they can also be called {\em components}) are of the form}

  $$S_{1^d}(\sigma ):=\{ a\in S_{1^d}~|~{\rm sign}(a_j)=
  \sigma _j, 0\leq j\leq d-1\}$$
  {\rm for some $\sigma =(\sigma _0,\ldots ,\sigma _{d-1})\in \{ \pm \}^d$.}
  \end{rem}

\begin{tm}\label{tmremind}
  (1) For $k\geq 3$, every non-empty fibre $\tilde{f}_k$ of the projection
  $\pi ^k:\Pi _d^k\rightarrow \Pi _d^{k-1}$ is either a segment or a point.

  (2) The fibre $\tilde{f}_k$ is a segment (resp. a point) exactly
  if the fibre is over a
  point of the interior of $\Pi _d^{k-1}$ (resp. over $\partial \Pi _d^{k-1}$).

  (3) The mapping $\tilde{\mathcal{V}}:K_d\rightarrow \Pi _d$
  is a homeomorphism.

  (4) The restriction of the mapping $\tilde{\mathcal{V}}$
  to (the closure of) any stratum of $K_d$
  defines a homeomorphism of the (closure of the) stratum onto
  its image which is (the closure of) a stratum of $\Pi _d$.

  (5) A stratum $S$ of $\Pi _d$ defined by a multiplicity vector with $\ell$
  components is a smooth $\ell$-dimensional real submanifold in $\mathbb{R}^d$.
  It is the graph of a smooth $(d-\ell )$-dimensional vector-function
  defined on the projection of the stratum in $Oa_{d-\ell}\ldots a_{d-1}$. Thus
  $S$ is a real manifold with boundary. The
  field of tangent spaces to $S$ continuously extends to the strata from the
  closure of~$S$. The extension is everywhere transversal to the space
  $Oa_0\ldots a_{d-\ell -1}$. That is, the sum of the two vector spaces
  $Oa_0\ldots a_{d-\ell -1}$ and (the extension of) the field of tangent spaces
  to $S$ is the space $Oa_0\ldots a_{d-1}$.

  (6) For $k\geq 3$, the set $\Pi _d^k$ is the set of points on and
  between the graphs
  $H^k_+$ and $H^k_-$ of two locally Lipschitz functions defined on
  $\Pi _d^{k-1}$ whose values coincide on and only on $\partial \Pi _d^{k-1}$:

  $$\begin{array}{ccl}
    \Pi _d^k&=&\{ (a_{d-k},a^{(d-k+1)})\in \mathbb{R}\times \Pi _d^{k-1}~
  |~H^k_-(a^{(d-k+1)})
  \leq a_{d-k}\leq H^k_+(a^{(d-k+1)})\} ~,\\ \\
  &&$$(H^k_-(a^{(d-k+1)})=H^k_+(a^{(d-k+1)}))\Leftrightarrow
  (a^{(d-k+1)}\in \partial \Pi _d^{k-1})~.\end{array}$$

  (7) For $k\geq 3$,
  the graph $H^k_+$ (resp. $H^k_-$) consists of the closures of the strata
  whose multiplicity vectors are of the form $(r,1,s,1,\ldots )$ (resp.
  $(1,r,1,s,\ldots )$) and which have exactly $k-1$ components. (In \cite{Ko}
  it is written ``$k$ components'' which is wrong.)

  (8) For $2\leq k\leq \ell$, the projection $S^k$ of every  
  $\ell$-dimensional stratum $S$ of $\Pi _d$ in the space
  $Oa_{d-k}\ldots a_{d-1}$
  is the set of points on and between the graphs $H^k_+(S)$
  and $H^k_-(S)$ of two locally Lipschitz functions defined on the closure
  $\overline{S^{k-1}}$ of $S^{k-1}$ whose
  values coincide on and only on $\partial S^{k-1}$.

\end{tm}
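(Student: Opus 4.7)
The plan is to obtain $M^k$ directly from the slab description of $\Pi_d^k$ over $\Pi_d^{k-1}$ supplied by Theorem~\ref{tmremind}(6), by intersecting with the sign constraint on $a_{d-k}$ and with $a^{(d-k+1)}\in M^{k-1}$. Concretely, we set
\[
L^k_-:=\begin{cases}\max\{0,H^k_-\}&\text{if }\sigma_{d-k}=+,\\ H^k_-&\text{if }\sigma_{d-k}=-,\end{cases}\qquad L^k_+:=\begin{cases}H^k_+&\text{if }\sigma_{d-k}=+,\\ \min\{0,H^k_+\}&\text{if }\sigma_{d-k}=-.\end{cases}
\]
Since $H^k_\pm$ are locally Lipschitz on all of $\Pi_d^{k-1}\supseteq\overline{M^{k-1}}$, the candidate functions $L^k_\pm$ are automatically continuous on $\overline{M^{k-1}}$, so the extension statement of the theorem is built into the definition.

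First, because $M\subset S_{1^d}$ lies in the topological interior of $\Pi_d$, iterating Theorem~\ref{tmremind}(6) downwards places every projection $M^j$ ($j\geq 2$) in the interior of $\Pi_d^j$; in particular for $a^{(d-k+1)}\in M^{k-1}$ the fibre of $\pi^k$ over it is the non-degenerate closed segment $[H^k_-,H^k_+]$. The inclusion $M^k\subseteq\{(a_{d-k},a^{(d-k+1)}):L^k_-<a_{d-k}<L^k_+,\ a^{(d-k+1)}\in M^{k-1}\}$ is then immediate, since a point of $M^k$ has $a_{d-k}\in(H^k_-,H^k_+)$ of sign $\sigma_{d-k}$. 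For the reverse inclusion we fix such a point, choose any lift $b=(b_0,\ldots,b_{d-k-1},b_{d-k},a^{(d-k+1)})\in M$ (available because $a^{(d-k+1)}\in M^{k-1}$), and consider the line segment $\gamma$ in $\mathbb{R}^k$ from $(b_{d-k},a^{(d-k+1)})$ to $(a_{d-k},a^{(d-k+1)})$: both endpoints share the sign $\sigma_{d-k}$, so $\gamma$ lies in the sign-constrained slab and hence in the interior of $\Pi_d^k$. Applying Theorem~\ref{tmremind} parts~(1), (2) and~(6) iteratively to the tower $\Pi_d\to\Pi_d^{d-1}\to\cdots\to\Pi_d^k$ identifies the fibre of $\Pi_d\to\Pi_d^k$ over each point of $\gamma$ with a closed iterated slab (a topological $(d-k)$-cell) whose bounding graphs depend continuously on the base. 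A continuous lift of $\gamma$ starting at $b$ then yields the desired extension, giving $(a_{d-k},a^{(d-k+1)})\in M^k$. Once this equality is established, coincidence of $L^k_-$ and $L^k_+$ on $\overline{M^{k-1}}$ can occur only when $H^k_-=H^k_+$ (exactly on $\partial\Pi_d^{k-1}$ by Theorem~\ref{tmremind}(6)) or when the clipping becomes tight ($H^k_+=0$ or $H^k_-=0$), and both possibilities place the point on $\partial M^{k-1}$.

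The main obstacle is arranging the continuous lift of $\gamma$ so that it stays inside $M$, i.e.\ avoids the coordinate hyperplanes $\{a_j=0\}$ for the lower indices $j<d-k$ (those with $j\geq d-k$ are automatically avoided along the base segment). The plan is to argue that the subset of $t\in[0,1]$ on which the lift lies in $M$ is open (since $M$ is open); that at any hypothetical first exit time $t^*$ the limit point lies on some $\{a_j=0\}$ with $j<d-k$ while the fibre $(d-k)$-cell at $t^*$ still has non-empty interior (the base staying in the interior of $\Pi_d^k$); and that the section can therefore be re-chosen inside that cell to skirt the offending hyperplane while preserving continuity. Remark~\ref{remsDescartes}(1), which states that the sign pattern $\sigma$ determines $pos$ and $neg$ for any hyperbolic polynomial with no zero coefficients, is used to rule out sign-incompatible extensions at the limit point and to guarantee that a proper-sign extension is always available in the interior of each fibre cell along $\gamma$.
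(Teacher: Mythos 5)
You have proved the wrong statement. The theorem you were asked to address is Theorem~\ref{tmremind}, the compendium of structural facts about the hyperbolicity domain $\Pi_d$: that the fibres of $\pi^k$ are segments or points, that the Vi\`ete mapping is a homeomorphism of $K_d$ onto $\Pi_d$, that the strata are graphs of smooth vector-functions, that $\Pi_d^k$ is the slab between the locally Lipschitz graphs $H^k_{\pm}$, and so on. The paper deliberately gives no proof of this theorem: it is a collection of known results of Arnold, Givental and the author, cited from \cite{Ar}, \cite{Gi}, \cite{KoProcRSE} and Chapter~2 of \cite{Ko}. What you have written is instead an attempted proof of Theorem~\ref{tmconcrete} (the description of the projections $M^k$ of a component $M$ of $\Pi_d^*$ between graphs $L^k_{\pm}$), and your argument invokes Theorem~\ref{tmremind} parts (1), (2) and (6) at every essential step. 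Read as a proof of Theorem~\ref{tmremind} it is therefore circular; none of parts (1)--(8) is actually established anywhere in your text.

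Even taken as a proof of Theorem~\ref{tmconcrete}, the proposal has a serious gap exactly where you locate "the main obstacle." Defining $L^k_{\pm}$ by clipping $H^k_{\pm}$ at $0$ handles only the sign of $a_{d-k}$; membership of a point in $M^k$ additionally requires a lift all of whose lower coordinates $a_j$, $j<d-k$, are non-zero with the signs prescribed by $\sigma$, and this is a condition on the whole fibre of $\Pi_d\rightarrow\Pi_d^k$, not on $a_{d-k}$ alone. Your open/closed lifting argument along $\gamma$, with the section "re-chosen inside that cell to skirt the offending hyperplane," asserts precisely the existence of such sign-correct points in each fibre cell without proving it, and the appeal to Remark~\ref{remsDescartes}(1) does not supply it (that remark constrains $pos$ and $neg$ given $\sigma$; it does not produce a polynomial in the fibre with prescribed coefficient signs). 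The paper's actual proof of Theorem~\ref{tmconcrete} handles this by an induction on $d$ together with a step-by-step enlargement $U_{1,\pm}\subset U_{2,\pm}\subset\cdots\subset U_{d-1,\pm}$, adding at stage $s$ the interiors of the fibres $f^{\diamond}_{d-s}$ over the graphs $L^{d-s}_{\pm}$, and uses the non-emptiness result of \cite{KoSe}; some such mechanism is needed to close your argument.
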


\begin{rems}\label{remsclosure}
  {\rm (1) The projections $\pi ^k$ are defined also for $k=2$.
    For $k=2$, each fibre $\tilde{f}_2$ is a half-line and only the graph
    $H_2^+$ (but not $H_2^-$) is defined,
    see Example~\ref{exk2}.

    (2) Consider two strata $S_1$ and $S_2$ of $\Pi _d$ defined by their
    multiplicity vectors $\mu (S_1)$ and $\mu (S_2)$. The stratum $S_2$
    belongs to the topological and algebraic closure
    of the stratum $S_1$ if and only if the vector $\mu (S_2)$ is obtained
    from the vector $\mu (S_1)$ by finitely-many replacings of two
    consecutive components by their sum.}
\end{rems}

\begin{rem}\label{remfibre}
  {\rm For $m\geq 2$, consider the fibres $f^{\diamond}_m$ of the projection}
  $$\pi ^m_*~:~\Pi _d\rightarrow \Pi _d^m~~~\, ,~~~\, 
  \pi ^m_*:=\pi ^{m+1}\circ \cdots \circ \pi ^d~.$$
      {\rm In particular, $\tilde{f}_d=f^{\diamond}_{d-1}$.
        Suppose that such a fibre $f^{\diamond}_m$ is over a point
        $A:=(a_{d-m}^0,\ldots ,a_{d-1}^0)\in \Pi _d^m$.
        When non-empty, the fibre $f^{\diamond}_m$ is either a point (when 
        $A\in \partial \Pi _d^m$) or a set homeomorphic to a
        $(d-m)$-dimensional cell
        and its boundary (when
        $A\in \Pi _d^m\setminus \partial \Pi _d^m$). This
        follows from part (6) of Theorem~\ref{tmremind}. 
        The boundary of the cell can be represented as consistsing
        of:

        -- two $0$-dimensional cells
        (these are the graphs
        of the functions $H^{m+1}_{\pm}|_A$),

        -- two $1$-dimensional cells
        (the graphs
        of $H^{m+2}_{\pm}|_{(\pi ^{m+1})^{-1}(A)}$),

        -- two $2$-dimensional cells
        (the graphs
        of $H^{m+3}_{\pm}|_{(\pi ^{m+1}\circ \pi ^{m+2})^{-1}(A)}$),

        -- $\ldots$,
        
        -- two
        $(d-m-1)$-dimensional cells
        (the graphs of
        $H^d_{\pm}|_{((\pi ^{m+1}\circ \pi ^{m+2}\circ \cdots \circ \pi ^{d-1})^{-1}(A)}$).}
  \end{rem}

\begin{rem}
  {\rm It is a priori clear that for the functions $L^k_{\pm}$
  defined in Theorem~\ref{tmconcrete}, one has the inequalities}

  $$L^k_+(a^{(d-k+1)})\leq H^k_+(a^{(d-k+1)})~~~{\rm and}~~~
    L^k_-(a^{(d-k+1)})\geq H^k_-(a^{(d-k+1)})$$
  {\rm for each value of $a^{(d-k-1)}$, where $L^k_+$ or $L^k_-$ (hence
    $H^k_+$ or $H^k_-$) is defined. It is also clear that the border of each
    component of the set $\Pi ^*_d$ consists of parts of the closures of
    the graphs $H^d_{\pm}$ and of parts of the hyperplanes
    $\{ a_j=0\}$, $j=1$, $\ldots$,~$d-1$.}
  \end{rem}

In Chapter~2 of \cite{Ko} one can find
also results concerning the hyperbolicity domain which are exposed in the
thesis \cite{Me} of I. M\'eguerditchian.

\section{Notation and examples\protect\label{secex}}

\begin{nota}\label{notaR}
  {\rm Given a $d$-tuple $\sigma =(\sigma _0,\ldots ,\sigma _{d-1})$, where
    $\sigma _j=+$ or~$-$, we denote
    by $\mathcal{R}(\sigma )$ the subset of
    $\mathbb{R}^d\cong Oa_0\cdots a_{d-1}$ defined by the conditions
    sign$(a_j)=\sigma _j$, $j=0$, $\ldots$, $d-1$, and we set
    $\Pi ^*_{d,\sigma}:=\Pi ^*_d\cap \mathcal{R}(\sigma )$. For a set
    $T\subset Oa_0\cdots a_{d-1}$, we denote by $T^k$ its projection in
    the space $Oa_{d-k}\cdots a_{d-1}$.}
  \end{nota}

\begin{ex}\label{exk1}
  {\rm For $k=1$ and for $a_j=0$, $j=0$, $\ldots$, $d-2$,
    there exists a hyperbolic polynomial of the form $(x+a_{d-1})x^{d-1}$
    with any $a_{d-1}\in \mathbb{R}$, so $\Pi _d^1=\mathbb{R}$. If one chooses
    any hyperbolic degree $d$ polynomial $Q_d^*$ with distinct roots, the
    shift $x\mapsto x+g$ results in $a_{d-1}\mapsto a_{d-1}+dg$, so there exist
    such polynomials $Q_d^*$ with any values of $a_{d-1}$. In addition, one
    can perturb the coefficients $a_0$, $\ldots$, $a_{d-2}$ to make them all
    non-zero by keeping the roots real and distinct. Thus
    $\Pi _d^{*1}=\mathbb{R}^*=\mathbb{R}\setminus \{ a_{d-1}=0\}$,}

  $$\Pi _d^{*1}\cap \{ a_{d-1}>0\} =\{ \mathbb{R}_+^*~:~a_{d-1}>0\} ~,~~~\,
  \Pi _d^{*1}\cap \{ a_{d-1}<0\} =\{ \mathbb{R}_-^*~:~a_{d-1}<0\} ~.$$
\end{ex}

\begin{ex}\label{exk2}
  {\rm One can formulate analogs to parts (1), (6) and (7) of
    Theorem~\ref{tmremind}
    for $k=2$ by saying that the border of the set
    $\Pi _d^2$ is the set $H^2_+$ while $H^2_-$ is empty, see part (1) of
    Remarks~\ref{remsclosure}. 
    
    The set $H^2_+$
    is the projection in $\mathbb{R}^2\cong Oa_{d-2}a_{d-1}$
    of the stratum of $\Pi _d$
    consisting of polynomials having a $d$-fold real root:
    $(x+\lambda )^d$. Its multiplicity vector equals $(d)$.
    Hence $a_{d-1}=d\lambda$, $a_{d-2}=d(d-1)\lambda ^2/2$, so
    $H^2_+:a_{d-2}=(d-1)a_{d-1}^2/2d$. 
One can observe
    that }
  $$\begin{array}{l}
  \Pi _d^{*2}=\{ a_{d-2}\neq 0\neq a_{d-1},~a_{d-2}<(d-1)a_{d-1}^2/2d\} ~,
  \\ \\
  \Pi _d^{*2}\cap \{ a_{d-1}>0,~a_{d-2}>0\} =
  \{ a_{d-1}>0,~0<a_{d-2}<(d-1)a_{d-1}^2/2d\} =:\Pi ^{*2}_{d++}~,\\ \\ 
  \Pi _d^{*2}\cap \{ a_{d-1}<0,~a_{d-2}>0\} =
  \{ a_{d-1}<0,~0<a_{d-2}<(d-1)a_{d-1}^2/2d\} =:\Pi ^{*2}_{d-+}~,\\ \\
  \Pi _d^{*2}\cap \{ a_{d-1}>0,~a_{d-2}<0\} =
  \{ a_{d-1}>0,~a_{d-2}<0\} =:\Pi ^{*2}_{d+-}~~~\, {\rm and}\\ \\ 
  \Pi _d^{*2}\cap \{ a_{d-1}<0,~a_{d-2}<0\} =
  \{ a_{d-1}<0,~a_{d-2}<0\} =:\Pi ^{*2}_{d--}~.\end{array}$$
{\rm To obtain similar formulas for $\Pi _d^2$ instead of $\Pi _d^{*2}$ one has
  to replace everywhere the inequalities $a_{d-1}<0$, $a_{d-1}>0$,
  $a_{d-2}<0$, $a_{d-2}>0$ and $a_{d-2}<(d-1)a_{d-1}^2/2d$ by $a_{d-1}\leq 0$,
  $a_{d-1}\geq 0$,
  $a_{d-2}\leq 0$, $a_{d-2}\geq 0$ and $a_{d-2}\leq (d-1)a_{d-1}^2/2d$ respectively.}
\end{ex}

\begin{figure}
\centerline{\hbox{\includegraphics[scale=0.4]{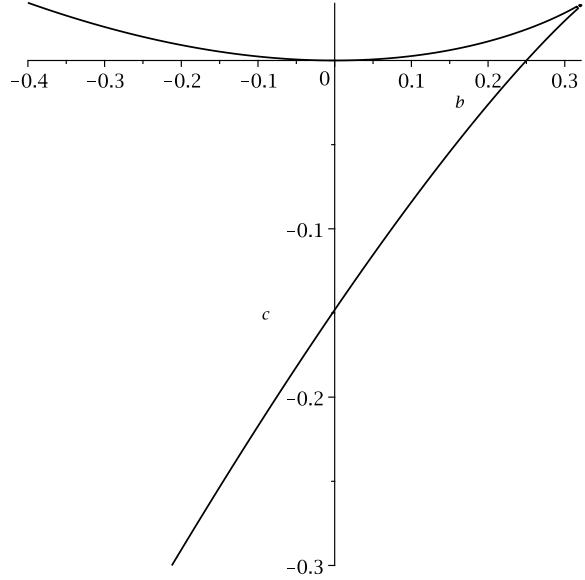}}}
\caption{The discriminant set of the family of polynomials $x^3+x^2+bx+c$
  and the sets $\Pi ^*_{3,\sigma}\cap \{ a=1\}$.}
\label{connect3fig}
\end{figure}

\begin{ex}\label{exk3}
  {\rm For $d=3$ (hence $\sigma =(\sigma _0,\sigma _1,\sigma _2)$),
    we set $a_2:=a$, $a_1:=b$, $a_0:=c$, and we consider the
    polynomial $Q_3:=x^3+ax^2+bx+c$. Taking into account the group of
    quasi-homogeneous dilatations which preserves the discriminant set
    (see part (2) of Remarks~\ref{remsdiscrim}) one concludes that each
  set $\Pi ^*_{3,\sigma}$ is diffeomorphic to the corresponding direct product

  $$(\Pi ^*_{3,\sigma}\cap \{ a=1\} )\times (0,\infty )~~~{\rm if}~~~\sigma _2=+
  ~~~{\rm or}~~~
  (\Pi ^*_{3,\sigma}\cap \{ a=-1\} )\times (-\infty ,0)~~~{\rm if}~~~\beta _2=-~.$$
  Set $\sigma ':=(-\sigma _0, \sigma _1,-\sigma _2)$. Using
  the same group of dilatations with $u=-1$ one deduces that the set
  $\Pi ^*_{3,\sigma '}\cap \{ a=-1\}$ is diffeomorphic to the set
  $\Pi ^*_{3,\sigma}\cap \{ a=1\}$. Therefore in order to prove that all sets
  $\Pi ^*_{3,\sigma}$ are contractible it suffices to show this for the sets
  $\Pi ^*_{3,\sigma}\cap \{ a=1\}$ with $\sigma _2=+$. The latter sets are shown
  in Fig.~\ref{connect3fig}. The figure represents the
  discriminant set of the polynomial $Q_3^{\bullet}:=x^3+x^2+bx+c$, i.~e. the set}

  $${\rm Res}(Q_3^{\bullet},{Q_3^{\bullet}}',x)=4b^3-b^2-18bc+27c^2+4c=0~.$$
  {\rm (The set $\Delta _3^2$ is empty, because there is not more than one 
    complex conjugate
  pair of roots, so $\Delta _3=\Delta _3^1$, see Remarks~\ref{remsdiscrim}.)  
    This is a curve in $\mathbb{R}^2:=Obc$ having a cusp at
      $(b,c)=(1/3,1/27)$ which corresponds to the polynomial $(x+1/3)^3$.
      The four sets $\Pi ^*_{3,\sigma}\cap \{ a=1\}$
      are the intersections of the interior of the curve with the open
      coordinate quadrants. The intersections with $\{ b>0,c>0\}$ and
      $\{ b>0,c<0\}$ are bounded curvilinear triangles.} 
  \end{ex}

\section{Proof of Theorem~\protect\ref{tm2roots}\protect\label{secprtm2roots}}

Part (1). Each set $\mathcal{E}_d(\sigma )$ is non-empty. Indeed, given a
polynomial $Q_d$ with $a_0>0$ (see Remark~\ref{remelliptic}),
for $C>0$ large enough, the polynomial $Q_d+C$
is elliptic. If the polynomials $Q_{d,1}$ and $Q_{d,2}$ belong
  to the set $\mathcal{E}_d(\sigma )$, then for
  $t\in [0,1]$, the polynomial $Q^{\sharp}_d:=tQ_{d,1}+(1-t)Q_{d,2}$
  also belongs to it.
  Indeed, the signs of the respective coefficients are the same and if
  $Q_{d,1}(x)>0$ and $Q_{d,2}(x)>0$, then $Q^{\sharp}_d(x)>0$. 
  Thus the set $\mathcal{E}_d(\sigma )$ is convex hence contractible.
  \vspace{1mm}
  
  Part (2). 
  Each set $\mathcal{F}_d(\sigma )$ is non-empty. Indeed, for $C>0$ large
  enough, the polynomial $Q_d+{\rm sign}(a_0)C$ has a single real root
  which is simple and the   
  sign of this root is opposite to the sign of $Q_d(0)$. For a given
  polynomial $Q_d\in \mathcal{F}_d(\sigma )$, denote this root by~$\xi$.
  Hence the polynomial $Q^0_d:=|\xi |^dQ_d(x/|\xi |)$
  is in $\mathcal{F}_d(\sigma )$
  and has a root at~$1$ or~$-1$. Suppose that the root is at~$1$ (for~$-1$
  the proof is similar). 
  We show that the subset $\mathcal{F}^0_d(\sigma )$ of
  $\mathcal{F}_d(\sigma )$ consisting of such polynomials $Q_d^0$
  is convex hence contractible. On the other hand the set
  $\mathcal{F}_d(\sigma )$ is diffeomorphic to
  $\mathcal{F}^0_d(\sigma )\times \mathbb{R}^*_+$ from which
  contractibility of $\mathcal{F}_d(\sigma )$ follows.

  For any two polynomials $Q_d^{0,\dagger}$,
  $Q_d^{0,*}\in \mathcal{F}^0_d(\sigma )$, the signs of the coefficients of the
  polynomial

  $$Q^{0,\flat}_d:=tQ_d^{0,\dagger}+(1-t)Q_d^{0,*},~t\in [0,1],$$
  are the same as the signs of the respective coefficients of
  $Q_d^{0,\dagger}$ and $Q_d^{0,*}$, so
  $Q^{0,\flat}_d\in \mathcal{F}^0_d(\sigma )$.
  This proves that $\mathcal{F}^0_d(\sigma )$ is convex. 
  \vspace{1mm}
  
  Part (3).
  \vspace{1mm}
  
  A) {\em Contractibility of the sets} $\mathcal{G}_{d,(2,0)}(\sigma )$ {\em and}
  $\mathcal{G}_{d,(0,2)}(\sigma )$.
  \vspace{1mm}
  
  The two real roots of $Q_d$ have the same sign (i.~e. $a_0>0$). 
  We assume that they are positive, i.~e. we prove contractibility only of
  $\mathcal{G}_{d,(2,0)}(\sigma )$; otherwise
  one can consider the polynomial $Q_d(-x)$ with the $d$-tuple
  $\tilde{\sigma}$ resulting
  from $\sigma$ via $x\mapsto -x$ (this mapping induces a bijection of the
  set of $d$-tuples onto itself) and contractibility of
  $\mathcal{G}_{d,(0,2)}(\tilde{\sigma})$ will be proved in the same way.
  Denote the real roots of $Q_d$ by $0<\xi <\eta$.
  
  We can assume that at least one coefficient of odd degree of $Q_d$
  is negative. Indeed, if all coefficients of $Q^0_d$
  of odd degree are positive,
  then by Descartes' rule
  of signs the polynomial $Q^0_d$ can have two real positive roots only if
  there is at least one coefficient of even degree which is negative.
  However in this case (and this is Case 1)) the set
  $\mathcal{G}_{d,(2,0)}(\sigma )$ is empty, see Proposition~4 in~\cite{FoKoSh}.

  Next, we assume that $\eta =1$ (hence $\xi \in (0,1)$).
  Indeed, if one considers instead of
  $Q_d\in \mathcal{G}_{d,(2,0)}(\sigma )$ the polynomial
  $Q^0_d:=\eta ^dQ_d(x/\eta )$, one has $Q^0_d\in \mathcal{G}_{d,(2,0)}(\sigma )$
  and $Q^0_d(1)=0$. We denote the set of such polynomials $Q^0_d$ by
  $\mathcal{G}^0_{d,(2,0)}(\sigma )$. As $\mathcal{G}_{d,(2,0)}(\sigma )$ is
  diffeomorphic to $\mathcal{G}^0_{d,(2,0)}(\sigma )\times \mathbb{R}^*_+$,
  contractibility of $\mathcal{G}^0_{d,(2,0)}(\sigma )$ implies the one
  of $\mathcal{G}_{d,(2,0)}(\sigma )$.  

  For $\xi ^*\in (0,1)$, we denote by $\mathcal{G}^{0,\xi ^*}_{d,(2,0)}(\sigma )$ the
  subset of polynomials of $\mathcal{G}^0_{d,(2,0)}(\sigma )$ with $\xi =\xi^*$.
  If $Q^{0,1}_d$ and $Q^{0,2}_d$ are two polynomials of
  $\mathcal{G}^{0,\xi ^*}_{d,(2,0)}(\sigma )$, then for $t\in [0,1]$, one has
  $tQ^{0,1}_d+(1-t)Q^{0,2}_d\in \mathcal{G}^{0,\xi ^*}_{d,(2,0)}(\sigma )$. Therefore
  for each $\xi \in (0,1)$, the set $\mathcal{G}^{0,\xi}_{d,(2,0)}(\sigma )$
  is convex hence contractible, and
  to prove contractibility of $\mathcal{G}^0_{d,(2,0)}(\sigma )$
  (and hence of $\mathcal{G}_{d,(2,0)}(\sigma )$) it suffices to find for
  each $\xi \in (0,1)$ a polynomial
  $Q^{0,\xi}_d\in \mathcal{G}^{0,\xi }_{d,(2,0)}(\sigma )$
  depending continuously on $\xi$. 

  Suppose that $m$ is odd, $1\leq m\leq d-1$, and that the coefficient of
  $Q_d\in \mathcal{G}_{d,(2,0)}(\sigma )$ of $x^m$ must be 
  negative. There exists a unique polynomial of the form

  \begin{equation}\label{eqdefiR}R:=x^d-Ax^m+B~,~~~A>0~,~~~
    B>0~,~~~{\rm such~that}~~~R(\xi )=R(1)=0~.
  \end{equation}
  Indeed, the conditions

  \begin{equation}\label{eqtwocond}
    \xi ^d-A\xi ^m+B=1-A+B=0
    \end{equation}
  imply

  \begin{equation}\label{eqABxi}
    A=(1-\xi ^d)/(1-\xi ^m)>0~~~{\rm and}~~~
    B=-1+A=\xi ^m(1-\xi ^{d-m})/(1-\xi ^m)>0~.
  \end{equation}

  \begin{rems}\label{remsAB}
    {\rm (1) The fractions for $A$, $B$ and $B/\xi ^m$ can be extended
      by continuity
      for $\xi =0$ and $\xi =1$. For $\xi \in [0,1]$, one has}

  \begin{equation}\label{eqlimit}
    \begin{array}{lll}A\in [1,\frac{d}{m}]~,&\lim _{\xi \rightarrow 0^+}A=1~,&
      \lim _{\xi \rightarrow 1^-}A=\frac{d}{m}~,\\ \\ 
      B\in [0,\frac{d-m}{m}]~,&\lim _{\xi \rightarrow 0^+}B=0^+~,&
      \lim _{\xi \rightarrow 1^-}B=\frac{d-m}{m}~,\\ \\ 
      B/\xi ^m\in [0,{\rm max}(\frac{d-m}{m},1)]~,&
      \lim _{\xi \rightarrow 0^+}B/\xi ^m=1~,&
      \lim _{\xi \rightarrow 1^-}B/\xi ^m=\frac{d-m}{m}~.
\end{array}\end{equation}

  {\rm (2) The function $R$ has a global minimum at some point
    $x_M=x_M(\xi )\in (0,1)$. One has}

  $$\lim _{\xi \rightarrow 0^+}x_M(\xi )=x_{M,0}=(m/d)^{1/(d-m)}\in (0,1)~,~~~
  R(x_{M,0})<0~~~{\rm and}~~~
    \lim _{\xi \rightarrow 1^-}x_M(\xi )=1~.$$
         {\rm For $m\geq 3$, the tangent line to the graph of $R$ for $x=0$
           is horizontal and $(0,R(0))$ is an inflection point. There is
           also another inflection point $x_I=x_I(\xi )\in (0,x_M)$.}
  \end{rems}

  Set

  \begin{equation}\label{eqdefiI}
    \mathcal{I}:=\{ 1,2,,\ldots ,m-1,m+1,m+2,\ldots ,d-1\} ~.
    \end{equation}
  We construct a polynomial $\Psi :=\sum _{j=0}^{d-1}\psi _jx^j$ with signs of
  the coefficients $\psi _j$, $j\in \mathcal{I}$, as defined
  by the $d$-tuple $\sigma$ and satisfying the conditions

  \begin{equation}\label{eqPsi}\Psi (\xi )=\Psi (1)=0~.
  \end{equation}
  The latter conditions can be considered as a linear system with
  unknown variables $\psi _0$ and $\psi _m$. Its determinant equals
  $\xi ^m-1\neq 0$, so for given $\psi _j$, $j\in \mathcal{I}$, 
  these conditions define a unique couple $(\psi _0, \psi _m)$ whose
  signs are not necessarily the ones defined by the $d$-tuple $\sigma$.
  So to construct $\Psi$ it suffices to fix $\psi _j$ for $j\in \mathcal{I}$.

  For each $\xi \in (0,1)$ fixed and for $\varepsilon >0$
  sufficiently small, one has
  $R+\varepsilon \Psi \in \mathcal{G}^{0,\xi}_{d,(2,0)}(\sigma )$. Indeed,
  for $m\neq j\neq 0$, the coefficients of $R+\varepsilon \Psi$ have the
  signs defined by the $d$-tuple $\sigma$, so one has to check two things:
  \vspace{1mm}
  
  1) If
  $\varepsilon$ is small enough, then

  \begin{equation}\label{eqAB}-A+\varepsilon \psi _m<0~~~{\rm and}~~~
    B+\varepsilon \psi _0>0~.
  \end{equation}
  To obtain these two conditions simultaneously
  for all $\xi \in (0,1)$, one has to choose $\varepsilon$ as a function
  of $\xi$.

  The conditions (\ref{eqPsi}) can be given the form

  $$\xi ^m\psi _m+\psi _0=U~,~\psi _m+\psi _0=V~,$$
  where $U$ and $V$ are polynomials in $\xi$ of degree $\leq d-1$. Hence

  \begin{equation}
    \label{eqpsi0m}
    \psi _0=(U-\xi ^mV)/(1-\xi ^m)~~~{\rm and}~~~\psi _m=(V-U)/(1-\xi ^m)~.
    \end{equation}
  Formulas (\ref{eqpsi0m}) imply that $\Psi$ is of the form

  \begin{equation}\label{eqKxxi}
    K(x,\xi )/(1-\xi )^m~,~~~
    K\in \mathbb{R}[x,\xi ]~,~~~{\rm deg}_xK\leq d-1~.
    \end{equation}
  As $\xi \rightarrow 0^+$, the quantity $B$ decreases as $\xi ^m$, see
  (\ref{eqABxi}) and (\ref{eqlimit}). As
  $\xi \rightarrow 1^-$, the quantities $|\psi _0|$ and $|\psi _m|$
  increase not faster than $C/(1-\xi )$ for some $C>0$. So to obtain
  $\varepsilon =\varepsilon (\xi )$ such that conditions (\ref{eqAB})  hold
  for $\xi \in (0,1)$, it suffices to set $\varepsilon :=c\xi ^{m+1}(1-\xi )^3$
  for some $c>0$ small enough.
  \vspace{1mm}

  2) For $\xi \in (0,1)$, one must have

  \begin{equation}\label{eqRpsi}
    R+\varepsilon \Psi >0~~~{\rm for}~~~
  x\in (-\infty ,\xi )\cup (1,\infty ),~~~{\rm and}~~~R+\varepsilon \Psi <0
  ~~~{\rm for}~~~x\in (\xi ,1)~.
  \end{equation}

  \begin{lm}\label{lmRpsi}
    It is possible to choose $c>0$ so small
    that conditions (\ref{eqAB}) and (\ref{eqRpsi}) hold true simultaneously.
  \end{lm}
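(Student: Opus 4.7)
The plan is to factor $(x-\xi)(x-1)$ out of $R+\varepsilon\Psi$, thereby reducing~(\ref{eqRpsi}) to a sign condition on a single polynomial of degree $d-2$. Since $R(\xi)=R(1)=0$ and, by construction, $\Psi(\xi)=\Psi(1)=0$, there exist $S,T\in\mathbb{R}[x]$ of degree $d-2$ with $R=(x-\xi)(x-1)S$ and $\Psi=(x-\xi)(x-1)T$. Applying Descartes' rule of signs to $R$ and to $R(-x)=x^d+Ax^m+B$ (here $d$ is even, $m$ is odd, and $A,B>0$), we see that $R$ has only the two real roots $\xi$ and $1$, both simple; hence $S$ has no real zeros, and comparison with the signs of $R$ on $(-\infty,\xi)\cup(\xi,1)\cup(1,\infty)$ forces $S>0$ on $\mathbb{R}$. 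After this factorization,
$$R+\varepsilon\Psi=(x-\xi)(x-1)(S+\varepsilon T),$$
so condition~(\ref{eqRpsi}) is equivalent to $S(x)+\varepsilon T(x)>0$ for every $x\in\mathbb{R}$ and every $\xi\in(0,1)$.

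Condition~(\ref{eqAB}) is then straightforward with $\varepsilon=c\xi^{m+1}(1-\xi)^3$. From~(\ref{eqABxi}) one has $A\geq 1$ and $B\geq c_0\xi^m$ for some absolute $c_0>0$, while~(\ref{eqpsi0m}) together with the polynomial structure of $U,V$ (of degree at most $d-1$) and the bound $1-\xi^m\geq 1-\xi$ on $[0,1]$ yields $|\psi_0|,|\psi_m|\leq C/(1-\xi)$ with $C$ independent of $\xi$. Hence $|\varepsilon\psi_j|\leq cC\xi^{m+1}(1-\xi)^2$, which is dominated by $A$ and by $B$ once $c$ is sufficiently small, uniformly for $\xi\in(0,1)$.

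The heart of the argument, and the main obstacle, is the uniform inequality $S+\varepsilon T>0$ on $\mathbb{R}\times(0,1)$. For $|x|\geq M_0$, where $M_0$ depends only on $d$, the leading term $x^{d-2}$ of $S$ dominates both the lower-order terms of $S$ and every term of $T$ (whose coefficients are controlled by $|\psi_j|=O(1/(1-\xi))$), so the inequality is automatic for $c$ small. On $|x|\leq M_0$, the required lower bound on $S$ must match the potential blow-up of $T$ near the endpoints of $(0,1)$. Near $\xi=1$, the limit $R_1(x)=x^d-(d/m)x^m+(d-m)/m$ factors as $(x-1)^2\widetilde{S}(x)$ with $\widetilde{S}>0$ on $\mathbb{R}$ (by the same Descartes argument), so $S\to\widetilde{S}$ uniformly on compacta and stays bounded below by a positive constant for $\xi$ close to $1$, while $|\varepsilon T|$ itself vanishes at rate $(1-\xi)^2$. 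Near $\xi=0$, one computes from $B\sim\xi^m$ that $S(x)$ is of order $\xi^{m-1}$ on a neighborhood of $[0,\xi]$ and bounded below by a positive constant elsewhere on $[-M_0,M_0]$; since $|T|$ stays uniformly bounded as $\xi\to 0^+$, the factor $\xi^{m+1}$ in $\varepsilon$ more than absorbs this vanishing. Patching these two boundary regimes with the trivial lower bound on any interior range $\xi\in[\delta,1-\delta]$ (where $S$ is uniformly positive by compactness and continuity in $(x,\xi)$) yields the required inequality, simultaneously with~(\ref{eqAB}), for every $\xi\in(0,1)$ once $c$ is chosen sufficiently small.
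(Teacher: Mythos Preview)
Your approach via the global factorization $R+\varepsilon\Psi=(x-\xi)(x-1)(S+\varepsilon T)$ is more unified than the paper's proof, which splits into five separate regimes (ranges of $x$ and of $\xi$) and treats each with ad hoc tools: derivative bounds for $|x|$ large, tangent-line and convexity comparisons for $x\in(0,\xi)$ and (when $m\ge3$ and $\xi$ is small) for $x\in(\xi,1)$, and the factorization $(x-1)(x-\xi)R_1$ only on $(\xi,1)$ with $\xi$ bounded away from~$0$. Your route buys conceptual economy; the paper's buys explicit pointwise control.

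There is, however, a genuine imprecision in your treatment of $\xi\to0^+$ when $m\ge3$. The dichotomy ``$S$ is of order $\xi^{m-1}$ on a neighbourhood of $[0,\xi]$ and bounded below by a positive constant elsewhere on $[-M_0,M_0]$'' is not correct as stated: on any \emph{fixed} neighbourhood of $0$, $S$ is not uniformly of order $\xi^{m-1}$ (away from $0$ it tends to the positive limit $x^{m-1}(1+x+\cdots+x^{d-m-1})$), while on any \emph{shrinking} neighbourhood $[-C\xi,C\xi]$ the complement still contains points (e.g.\ $x=2C\xi$) at which $S$ is of order $\xi^{m-1}$, not bounded below by a constant. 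What you actually need, and what is true, is the single uniform bound $S(x,\xi)\ge c_1\xi^{m-1}$ for all $x\in[-M_0,M_0]$ and small~$\xi$. This can be obtained by writing $x=\alpha\xi$ and computing $S(\alpha\xi,\xi)\sim\xi^{m-1}(1+\alpha+\cdots+\alpha^{m-1})$, the last factor being a polynomial of even degree $m-1$ with no real zeros; this handles $|x|\le C\xi$, and for $|x|\ge C\xi$ one interpolates with the limit $S(x,0)=x^{m-1}(1+x+\cdots+x^{d-m-1})\ge c'|x|^{m-1}\ge c'(C\xi)^{m-1}$. Once this lower bound is supplied, your argument goes through. (Two minor points: $\deg T\le d-3$, not $d-2$; and in fact $\psi_0,\psi_m$ extend continuously to $\xi=1$, so your $O(1/(1-\xi))$ estimate is not sharp, though it suffices.)
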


  The lemma implies that for such $c>0$, 
  $R+\varepsilon (\xi )\Psi \in \mathcal{G}^{0,\xi}_{d,(2,0)}(\sigma )$. So one
  can set $Q^{0,\xi}_d:=R+\varepsilon (\xi )\Psi$ from which contractibility of
  $\mathcal{G}_{d,(2,0)}(\sigma )$ follows. 

  \begin{proof}[Proof of Lemma~\ref{lmRpsi}]
    Conditions (\ref{eqAB}) were already discussed, so we focus on conditions
    (\ref{eqRpsi}). Lowercase indices $\xi$ indicate differentiations
    w.r.t.~$\xi$. 
    \vspace{1mm}
    
    a) To obtain the condition $R+\varepsilon \Psi >0$
  for $x>1$, it suffices to get $(R+\varepsilon \Psi )'>0$ for $x\geq 1$.
  For $x\geq 1$, one has

  \begin{equation}\label{eqRprime}
    R'=dx^{d-1}-mAx^{m-1}=x^{m-1}(dx^{d-m}-mA)\geq x^{m-1}(d-mA)
    \end{equation}
  (as $R'(1)>0$, one knows that $d-mA>0$). Next, 

  $$d-mA=\Lambda /(1-\xi ^m)~,~~~\Lambda :=d-m+m\xi ^d-d\xi ^m~.$$
  There exists $\alpha >0$ such that for $\xi \in [0,1]$,
  $\Lambda \geq \alpha (1-\xi )^2$. Indeed,
  $\Lambda _{\xi}=dm(\xi ^{d-1}-\xi ^{m-1})\leq 0$, with equality only for
  $\xi =0$ and $\xi =1$, so $\Lambda$ is strictly decreasing on $[0,1]$.
  The existence of $\alpha$ follows from

  $$\begin{array}{ll}
    \Lambda (0)=d-m>0~,~~~\Lambda (1)=\Lambda _{\xi}(1)=0&{\rm and}\\ \\
    \Lambda _{\xi \xi}=dm((d-1)\xi ^{d-1}-(m-1)\xi ^{m-1})&{\rm hence}\\ \\
    \Lambda _{\xi \xi}(1)=dm(d-m)>0~.&\end{array}$$
  Thus for $\xi \in (0,1)$ and $x>1$, one has

  $$R\geq (x^m/m)\alpha (1-\xi )^2/(1-\xi ^m)~~~\, {\rm and}~~~\, 
  \varepsilon (\xi )\Psi \leq c\xi ^{m+1}(1-\xi )^3K(x,\xi )/(1-\xi ^m)~,$$
  see (\ref{eqKxxi}). 
  One can choose $c>0$
  sufficiently small so that for $x\in (1,2]$, $R+\varepsilon \Psi >0$. There
    exists $\beta >0$ such that for $x\geq 2$, $dx^{d-m}-mA>\beta x^{d-m}$
    (see (\ref{eqRprime}) and (\ref{eqlimit})),
    so $R\geq \beta x^d/d$ and for $c>0$
    small enough, $R+\varepsilon \Psi >0$.
    \vspace{1mm}
    
    b) For $x\leq -1$ (resp. for $x\in [-1,0]$),
    one has

    $$R\geq |x|^m(|x|^{d-m}+A)~~~\, {\rm (resp.}~~~\, 
    R\geq B\geq ({\rm max}((d-m)/m,1))\xi ^m{\rm )}$$
    (see (\ref{eqdefiR}) and (\ref{eqlimit})) which
    for $c>0$ small enough is larger than $|\varepsilon (\xi )\Psi |$ and
    (\ref{eqRpsi}) holds true.
    \vspace{1mm}

    c) Suppose that $x\in (0,\xi )$. Then $R\geq \min (h(x,\xi ), q(x,\xi ))$,
    where

    $$\tau ~:~y=h(x,\xi):=R'(\xi )(x-\xi )~~~\, {\rm and}~~~\, \chi ~:~
    y=q(x,\xi ):=B-Bx/\xi$$
    are the tangent line to the graph of $R$ at the point $(\xi ,0)$ and
    the line joining the points $(0,B)$ and $(\xi ,0)$ respectively. Indeed,
    if $x_I\in [\xi ,1)$ (see part (2) of Remarks~\ref{remsAB}),
      then the graph of $R$ is concave for $x\in [0,\xi]$,
      so it is situated above the line $\chi$. If $x_I\in (0,\xi )$, then for
      $x\in [x_I,\xi ]$, one has $R\geq h(x,\xi )$ and for $x\in (0,x_I]$,
    one has $R\geq q_1(x,\xi )$, where

    $$\chi _1~:~y=q_1(x,\xi ):=R(x_I)+(x-x_I)(R(x_I)-B)/x_I$$
    is the line joining the points $(0,B)$ and $(x_I,R(x_I))$.
    The line $\chi _1$ is above the line $\chi$ for $x\in (0,x_I)$.

    Consider the smaller in absolute value of the slopes of the lines $\tau$
    and $\chi$, i.e. $\mu :=\min (|R'(\xi )|,B/\xi )$. One finds that

    $$R'(\xi )=\xi ^{m-1}g(\xi )/(1-\xi ^m)~,~~~g:=d\xi ^{d-m}-m-(d-m)\xi ^d~,$$
    with $g_{\xi}=d(d-m)(\xi ^{d-m-1}-\xi ^{d-1})\geq 0$, with equality only for
    $\xi =0$ and $\xi =1$. As $g(0)=-m<0$, $g(1)=0$,

    $$g_{\xi \xi}=d(d-m)((d-m-1)\xi ^{d-m-2}-(d-1)\xi ^{d-2})~,~~~\, {\rm so}~~~\,
    g_{\xi \xi}(1)=-md(d-m)<0~,$$
    there exists $\tilde{\beta}>0$ such that for $\xi \in (0,1)$,
    $|R'(\xi )|\geq \tilde{\beta}\xi ^{m-1}(1-\xi )^2/(1-\xi ^m)$.
    On the other hand
    $B/\xi =\xi ^{m-1}(1-\xi ^{d-m})/(1-\xi ^m)$. Thus
    $\mu \geq \mu _0:=\gamma \xi ^{m-1}(1-\xi )$ for some $\gamma >0$. Hence
    for $x\in (0,\xi )$, the graph of $R$ is above the line
    $\delta :y=-\mu _0(x-\xi )$.

    There exists $D_0>0$ such that for $\xi \in (0,1)$ and $x\in [0,1]$,
    one has $|(1-\xi )\Psi '|\leq D_0$, see (\ref{eqpsi0m}). Hence if $c>0$
    is sufficiently small,
    the graph of $\varepsilon \Psi$ is below the line $\delta$ for
    $x\in [0,\xi )$, so $R+\varepsilon \Psi >0$.
\vspace{1mm}

d) Suppose that $m\geq 3$ and that $\xi >0$ is close to $0$.
Then for $x>\xi$, the 
line $\tilde{\tau}$, which is tangent to the graph of $R$ at the point
$(\xi ,0)$, is above the straight line $\tilde{\rho}$ joining the points
$(\xi ,0)$ and $(x_M,R(x_M))$. Indeed,

$$R'(\xi )=\xi ^{m-1}(d\xi ^{d-m}-m-(d-m)\xi ^d)/(1-\xi ^m)=O(\xi ^{m-1})$$
whereas the slope of $\tilde{\rho}$ is close to $R(x_{M,0})/x_{M,0}<0$.
Therefore for
$x\in (\xi ,x_M]$, the graph of $R$ is below the line $\tilde{\tau}$.

    For
    $x\in [x_M,1)$, the graph of $R$ is below the line $\tilde{\chi}$
      joining the points $(x_M,R(x_M))$ and $(1,0)$ whose slope
      $-R(x_M)/(1-x_M)$ is close to $-R(x_{M,0})/(1-x_{M,0})>0$. On the other hand
      one has $|(1-\xi )\Psi '|\leq D_0$ (see c)), so
      $|\varepsilon (\xi )\Psi '|\leq c\xi ^{m+1}(1-\xi )^2D_0$. Thus
      the graph of $\varepsilon (\xi )\Psi$ is above the line $\tilde{\tau}$
      for $x\in (\xi ,x_M]$ and above $\tilde{\chi}$ for $x\in [x_M,1)$, hence
      it is between the graph of $R$ and the $x$-axis for $x\in (\xi ,1)$,
      so $R+\varepsilon \Psi <0$. 
      \vspace{1mm}

      e) For $m\geq 3$, we fix $\theta _0>0$ small enough such that
      for $\xi \in (0,\theta _0]$, $R+\varepsilon \Psi <0$, see d). 
      For $m\geq 3$,
      $\xi \in [\theta _0,1]$, $x\in (\xi ,1)$, and for $m=1$,
      $\xi \in [0,1]$, $x\in (\xi ,1)$, one has $R+\varepsilon (\xi )\Psi <0$
      if $c>0$ is small enough. Indeed, one can write

      $$R=(x-1)(x-\xi )R_1~~~\, {\rm and}~~~\, \Psi =(x-1)(x-\xi )\Psi _1~,~~~
      R_1,~\Psi _1\in \mathbb{R}[x,\xi]~.$$
      Then $R_1(x,\xi )>0$. In particular, for $\xi =1$, one obtains

      $$R=x^d-(d/m)x^m+(d-m)/m~,~~~\, R'=dx^{d-1}-dx^{m-1}~,~~~\, {\rm so}~~~\,
      R'(1)=0~,$$
      and $R''=d((d-1)x^{d-2}-(m-1)x^{m-2})$ hence $R''(1)=d(d-m)>0$, i.~e. $R$
      is divisible by $(x-1)^2$, but not by $(x-1)^3$.

      For $m=1$, $\xi =0$,
      one has $R'(0)<0$ (whereas for $m=3$, $\xi =0$, one has $R'(0)=0$),
      this why for $m=1$ our reasoning is valid for $\xi \in [0,1]$, not
      only for $\xi \in [\theta _0,1]$.

      Denote by $R_{1,0}>0$ the minimal value of $R_1$ and by $\Psi _{1,0}$ the
      maximal value of $\Psi _1$ for $x\in [0,1]$. One can choose $c>0$ so small
      that for $x\in (\xi ,1)$ and for the values of $\xi$ mentioned at the
      beginning of e), 

      $$R_1-\varepsilon \Psi _1\geq R_{1,0}-\varepsilon \Psi _{1,0}>0~,~~~\,
      {\rm so}~~~\, R+\varepsilon \Psi <0~,~~~\,
      {\rm because}~~~\, (x-1)(x-\xi )<0~.$$
      The proof of the lemma results from a) -- e).
  \end{proof}
  \vspace{1mm}
  
  B) {\em Contractibility of the set} $\mathcal{G}_{d,(1,1)}(\sigma )$.
  \vspace{1mm}
  
  The two real roots of $Q_d$ have opposite signs
  (hence $a_0<0$). Denote them by $-\eta <0<\xi$. We define the sets 

$$\mathcal{K}:=\mathcal{G}_{d,(1,1)}(\sigma )\cap \{ \xi >\eta \}~,~~~
  \mathcal{L}:=\mathcal{G}_{d,(1,1)}(\sigma )\cap \{ \xi <\eta \} ~~~{\rm and}~~~
  \mathcal{M}:=\mathcal{G}_{d,(1,1)}(\sigma )\cap \{ \xi =\eta \} ~.$$

  \begin{lm}\label{lmKLM}
    Set $\sigma :=(\sigma _0,\ldots ,\sigma _{d-1})$, $\sigma _j=+$ or~$-$.
    
    (1) Suppose that $\sigma _{2j+1}=+$, $j=0$, $1$, $\ldots$, $(d/2)-1$.
    Then $\mathcal{K}=\mathcal{M}=\emptyset$.

    (2) Suppose that $\sigma _{2j+1}=-$, $j=0$, $1$, $\ldots$, $(d/2)-1$.
    Then $\mathcal{L}=\mathcal{M}=\emptyset$.

    (3) Suppose that there exist two odd integers
    $j_1\neq j_2$, $1\leq j_1,j_2\leq d-1$, such that
    $\sigma _{j_1}=-\sigma _{j_2}$. Then all three sets
    $\mathcal{K}$, $\mathcal{L}$ and $\mathcal{M}$ are non-empty. There exists
    an open $d$-dimensional
    ball $\mathcal{B}\subset \mathcal{G}_{d,(1,1)}(\sigma )$
    centered at a point in $\mathcal{M}$ and such that  
    $\mathcal{B}\cap \mathcal{K}\neq \emptyset$ and
    $\mathcal{B}\cap \mathcal{L}\neq \emptyset$.
  \end{lm}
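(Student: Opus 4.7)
The plan is to treat parts (1) and (2) by a sign analysis of the even and odd parts of $Q_d$, and part (3) by first constructing an explicit element of $\mathcal{M}$ via a perturbation of $x^d-\xi^d$, then invoking local submersivity of the map ``coefficients $\mapsto$ pair of real roots'' to produce points of $\mathcal{K}$ and $\mathcal{L}$ arbitrarily close to $\mathcal{M}$. The main obstacle will be the construction in part (3): all sign constraints must be met simultaneously while preserving the root symmetry $\xi=\eta$, and the mixed-sign hypothesis on odd indices enters in an essential way.

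For parts (1) and (2), write $Q_d=E+O$ with $E$ even and $O$ odd. From $Q_d(\xi)=0$ and $Q_d(-\eta)=0$ one reads off $E(\xi)=-O(\xi)$ and $E(\eta)=O(\eta)$. Under the hypothesis of part (1) one has $O(x)>0$ for $x>0$, so $E(\xi)<0<E(\eta)$. In case $\mathcal{M}$ this forces $E(\xi)=E(\eta)$ to be both negative and positive, an immediate contradiction. In case $\mathcal{K}$ ($\xi>\eta$) the intermediate value theorem gives $\rho\in(\eta,\xi)$ with $E(\rho)=0$; by parity $E(-\rho)=0$ as well, whence $Q_d(-\rho)=E(\rho)-O(\rho)=-O(\rho)<0$. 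But $Q_d$ is negative exactly on $(-\eta,\xi)$ (since $Q_d(0)=a_0<0$ and $\xi,-\eta$ are its only real roots), and $-\rho<-\eta$ lies outside this interval, so $Q_d(-\rho)>0$, contradiction. Part (2) follows from part (1) via $x\mapsto -x$, which swaps $\mathcal{K}$ with $\mathcal{L}$ and turns ``all odd $a_j>0$'' into ``all odd $a_j<0$''.

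The central step in part (3) is to exhibit $Q^*\in\mathcal{M}$ with sign pattern $\sigma$. Fix $\xi>0$ and set $R(x):=x^d-\xi^d$; for $d$ even $R$ has exactly two simple real roots $\pm\xi$, the remaining $d$-th roots of $\xi^d$ being non-real. Take $Q^*:=R+\varepsilon\Psi$ with $\deg\Psi\le d-1$ and $\Psi(\pm\xi)=0$, which is equivalent to $\Psi_e(\xi)=0$ and $\Psi_o(\xi)=0$ for the even and odd parts of $\Psi$. For the odd coefficients, prescribe $\psi_{2k+1}$ with signs $\sigma_{2k+1}$ and adjust magnitudes so that $\sum_k\psi_{2k+1}\xi^{2k+1}=0$; this is solvable precisely because the $\sigma_{2k+1}$ are not all equal (the hypothesis of case (3)). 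For the even coefficients $\psi_{2k}$ with $k\ge 1$, prescribe the signs $\sigma_{2k}$ freely, and solve for the remaining parameter $\psi_0$ to enforce $\Psi_e(\xi)=0$; its resulting sign is immaterial since the constant term of $Q^*$ equals $-\xi^d+\varepsilon\psi_0$, which stays negative for all small $\varepsilon>0$, matching $\sigma_0=-$. For small enough $\varepsilon$ the remaining coefficients $\varepsilon\psi_j$ of $Q^*$ carry the prescribed signs; by construction $Q^*(\pm\xi)=0$, and simplicity of $\pm\xi$ as roots of $R$ together with the others being non-real is preserved under small perturbations, so $Q^*\in\mathcal{M}$.

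Finally, to produce the ball $\mathcal{B}$, consider the smooth map $\Phi$ defined on a neighborhood of $Q^*$ in $\mathcal{G}_{d,(1,1)}(\sigma)$ sending $Q\mapsto(\xi(Q),\eta(Q))$. Implicit differentiation at the simple roots $\pm\xi$ of $Q^*$ shows that a coefficient perturbation $(b_0,\ldots,b_{d-1})$ moves $(\xi,\eta)$ by quantities proportional to $\sum_j b_j\xi^j$ and $\sum_j b_j(-\xi)^j$ respectively; these two linear functionals on $\mathbb{R}^d$ are independent because any odd-index perturbation $b_j=\delta_{j,m}$ (with $m$ odd) distinguishes them. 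Hence $\Phi$ is a submersion at $Q^*$, so on a small open ball $\mathcal{B}\subset\mathcal{G}_{d,(1,1)}(\sigma)$ around $Q^*$ the image $\Phi(\mathcal{B})$ contains a two-dimensional neighborhood of $(\xi,\xi)$, which in particular contains points with $\xi>\eta$ and points with $\xi<\eta$. This yields $\mathcal{B}\cap\mathcal{K}\ne\emptyset$ and $\mathcal{B}\cap\mathcal{L}\ne\emptyset$, and in particular $\mathcal{K},\mathcal{L}\ne\emptyset$.
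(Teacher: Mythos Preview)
Your proof is correct. For part~(3) your strategy matches the paper's closely: both construct a point of $\mathcal{M}$ by perturbing $x^d-\xi^d$ with a polynomial $\Psi$ vanishing at $\pm\xi$, using the mixed odd signs to force $\Psi_o(\xi)=0$ while keeping all prescribed signs; and both then vary the roots locally to hit $\mathcal{K}$ and $\mathcal{L}$. The only real difference there is that the paper works in root space (move the root $\xi$ while fixing $-\eta$ and the complex roots, so the coefficients change only slightly), whereas you work in coefficient space via the submersion $Q\mapsto(\xi(Q),\eta(Q))$; these are dual arguments and yours is arguably the more carefully justified of the two. For parts~(1)--(2), however, your route is genuinely different and more elaborate than the paper's. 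The paper simply observes that if all odd coefficients are positive then $Q_d(a)-Q_d(-a)=2O(a)>0$ for $a>0$; plugging in $a=\eta$ gives $Q_d(\eta)>Q_d(-\eta)=0$, and since $Q_d(0)<0$ the positive root must lie in $(0,\eta)$, i.e.\ $\xi<\eta$. This one-line comparison disposes of $\mathcal{K}$ and $\mathcal{M}$ at once, without your detour through a zero $\rho$ of the even part and the contradiction at $-\rho$. Your argument is perfectly valid, but the paper's is shorter and more transparent.
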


  \begin{proof}
    Parts (1) and (2). If
    $\sigma _{2j+1}=+$ (resp. $\sigma _{2j+1}=-$),
    $j=0$, $1$, $\ldots$, $(d/2)-1$, then
    for a polynomial $Q_d\in \mathcal{G}_{d,(1,1)}(\sigma )$, one has
    $Q_d(0)<0$ and $Q_d(a)>Q_d(-a)$ (resp. $Q_d(0)<0$ and $Q_d(a)<Q_d(-a)$)
    for $a>0$. Hence $\xi <\eta$ (resp. $\xi >\eta$).

    Part (3). We construct a polynomial
    $Q_d^{\diamond}\in \mathcal{M}$. Set $u:=\xi ^{j_1-j_2}$ and 

    $$Q_d^{\diamond}:=x^d-\xi ^d+\sigma_{j_1}(x^{j_1}-ux^{j_2})+
    \varepsilon (Q_d^{\diamond ,o}+Q_d^{\diamond ,e})~,$$
    where 

    $$Q_d^{\diamond ,e}=b+\sum _{j=1}^{d/2}\sigma_{2j}x^{2j}~,~~~
    b\in \mathbb{R}~,~~~ 
    Q_d^{\diamond ,o}=rx^{j_1}+\sum _{j=0}^{d/2-1}\sigma_{2j+1}x^{2j+1}$$
    and
    $\varepsilon >0$ is small enough. We choose $b$ and $r$ such that
    $Q_d^{\diamond ,e}(\pm \xi )=0$ and $Q_d^{\diamond ,o}(\pm \xi )=0$
    respectively. Then $Q_d^{\diamond}(\pm \xi )=0$ and for $j\neq 0$ and 
    $j_1\neq j\neq j_2$, the sign of
    the coefficients of $x^j$ of $Q_d^{\diamond}$ is as defined by $\sigma$. For
    $\varepsilon >0$ small enough, one has
    sign$(Q_d^{\diamond}(0))$=sign$(-\xi ^d+\varepsilon b)=-$.
    The coefficient of $x^{j_1}$ (resp. $x^{j_2}$) of $Q_d^{\diamond}$ equals
    $\sigma_{j_1}\times (1+\varepsilon (1+r))$
    (resp. $\sigma_{j_2}\times (u+\varepsilon (1+r))$), so it has the same sign
    as~$\sigma _{j_1}$ (resp. as~$\sigma _{j_2}$).

    Consider a $d$-dimensional ball $\mathcal{B}$
    centered at a point $Q_d^{\diamond}\in \mathcal{M}$, with $\xi =\eta =\xi _0$
    and belonging
    to $\mathcal{G}_{d,(1,1)}(\sigma )$. Perturb the real root $\xi$ of
    $Q_d^{\diamond}$ so that it takes values smaller and values larger
    than~$\xi _0$. The signs of the coefficients of $Q_d^{\diamond}$ do not
    change. Hence $\mathcal{B}$ intersects $\mathcal{K}$ and~$\mathcal{L}$.
  \end{proof}
  We show first that each
  of the two sets $\mathcal{K}$ and $\mathcal{L}$, when nonempty,
  is contractible. If we are in the conditions of part (1) or~(2) of
  Lemma~\ref{lmKLM}, then this implies contractibility of
  $\mathcal{G}_{d,(1,1)}(\sigma )$. When we are in
  the conditions of part~(3), then one can contract
  $\mathcal{K}$ and $\mathcal{L}$ into points of $\mathcal{B}$
  and then contract $\mathcal{B}$ into a point,
  so in this case $\mathcal{G}_{d,(1,1)}(\sigma )$ is also contractible.

  We prove contractibility only of $\mathcal{K}$ (when non-empty). The one
  of $\mathcal{L}$ is performed by complete analogy (the change of variable
  $x\mapsto -x$ exchanges the roles of $\mathcal{K}$ and $\mathcal{L}$ and
  changes the $d$-tuple $\sigma$ accordingly).
  So we suppose that $\xi >\eta$. As in the proof of A) we reduce the proof
  of the contractibility of $\mathcal{K}$ to the one of the contractibility
  of $\mathcal{K}\cap \{ \xi =1\}$. As in A) we observe that
  if

  $$Q^{\ddagger}_d~,~~~Q^{\triangle}_d\in \mathcal{K}^{\eta^*}:=
  \mathcal{K}\cap \{ \xi =1, \eta =\eta ^*\in (0,1)\} ~,$$
  then 
  $tQ^{\ddagger}_d+(1-t)Q^{\triangle}_d\in \mathcal{K}^{\eta^*}$, 
  so $ \mathcal{K}^{\eta^*}$ is convex hence contractible and contractibility of
  $\mathcal{K}\cap \{ \xi =1\}$ (and also of $\mathcal{K}$) will be proved if
  we construct for each $\eta \in (0,1)$ a polynomial
  $Q_d\in \mathcal{K}^{\eta}$ depending continuously on~$\eta$.

  Suppose that there is a negative coefficient of $Q_d$ of odd degree~$m$
  (otherwise $\mathcal{K}$ is empty).
  For $\eta \in (0,1)$, we construct a polynomial

  $$S:=x^d-\tilde{A}x^m-\tilde{B}~,~~~\tilde{A}>0~,~~~\tilde{B}>0~,~~~
  {\rm such~that}~~~
  S(1)=S(-\eta )=0~.$$
  The latter two equalities imply

  \begin{equation}\label{eqABbis}
    \tilde{A}=(1-\eta ^d)/(1+\eta ^m)>0~{\rm and}~
    \tilde{B}=\eta ^m(1+\eta ^{d-m})/(1+\eta ^m)>0~.
    \end{equation}
  \begin{rems}\label{remsS}

    {\rm (1) Thus for $\eta \in [0,1]$, there exist constants
      $0<B_{{\rm min}}\leq B_{{\rm max}}$ such that
      $\tilde{B}/\eta ^m\in [B_{{\rm min}},B_{{\rm max}}]$. Moreover one has} 

  \begin{equation}\label{eqlimbis}
    \begin{array}{lll}\tilde{A}\in [0,1]~,&\lim _{\eta \rightarrow 0^+}\tilde{A}=1~,&
      \lim _{\eta \rightarrow 1^-}\tilde{A}=0^+~,\\ \\ \tilde{B}\in [0,1]~,&
      \lim _{\eta \rightarrow 0^+}\tilde{B}=0^+~,&
      \lim _{\eta \rightarrow 1^-}\tilde{B}=1~,\\ \\
      \lim _{\eta \rightarrow 0^+}\tilde{B}/\eta ^m=1
      &{\rm and}&\lim _{\eta \rightarrow 1^-}\tilde{B}/\eta ^m=1~.
    \end{array}
  \end{equation}

  {\rm (2) The derivative $S'$ has a unique root $\tilde{x}_M$
    (which is simple) in $(0,1)$.
    All non-constant derivatives of $S$ are increasing for $x>\tilde{x}_M$, 
    have one or two roots (depending on $m$) in $[0,\tilde{x}_M)$
    and no root outside
    this interval.}
  \end{rems}
  
  We construct a polynomial $\Phi :=\sum _{j=0}^{d-1}\varphi _jx^j$, where for
  $j\in \mathcal{I}$ (see (\ref{eqdefiI})),
  the sign of $\varphi _j$ is defined by the
  $d$-tuple~$\sigma$. This polynomial must satisfy the condition

  $$\Phi (-\eta )=\Phi (1)=0$$
  which can be regarded as a linear system with known quantities $\varphi _j$,
  $j\in \mathcal{I}$, and with unknown variables $\varphi _0$ and $\varphi _m$:

  \begin{equation}\label{eqWT}
    \begin{array}{lll}-\eta ^m\varphi _m+\varphi _0=W~,&
    \varphi _m+\varphi _0=T~,&
    W,T\in \mathbb{R}[\eta ]~,~~~{\rm so}\\ \\
    \varphi _0=(\eta ^mT+W)/(1+\eta ^m)~,&\varphi _m=(T-W)/(1+\eta ^m)~.&
    \end{array}
    \end{equation}
  One must also have $S+\varepsilon _1(\eta )\Phi\in \mathcal{K}^{\eta}$,
  $\eta \in (0,1)$,
  for some suitably chosen positive-valued continuous
  function $\varepsilon _1(\eta )$.
  For $\varepsilon _1(\eta )>0$ small enough, the sign of the coefficient of
  $x^j$, $j\in \mathcal{I}$, of the polynomial
  $S+\varepsilon _1(\eta )\Phi$ is as defined by
  the $d$-tuple~$\sigma$. So one needs to choose $\varepsilon _1(\eta )$
  such that

  \begin{equation}\label{eqsigns}
    \tilde{A}+\varepsilon _1(\eta )\varphi _m<0~,~-
    \tilde{B}+\varepsilon _1(\eta )\varphi _0<0
  \end{equation}
  and
  \begin{equation}\label{eqsignsbis}
    S+\varepsilon _1(\eta )\Phi >0~~~\, {\rm for}~~~\, 
      x\in (-\infty ,-\eta )\cup (1,\infty)~,~~~S+\varepsilon _1(\eta )\Phi <0
      ~~~\, {\rm for}~~~\, x\in (-\eta ,1)~.
  \end{equation}
  We set $\varepsilon _1:=\tilde{c}\eta ^m(1-\eta )^2$, $\tilde{c}>0$.
  If one chooses $\tilde{c}$ small
  enough, conditions (\ref{eqsigns}) will hold true.

  \begin{lm}\label{lmS}
    For $\tilde{c}>0$ small enough, conditions (\ref{eqsignsbis}) hold true.
  \end{lm}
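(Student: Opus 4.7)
The plan is to repeat the region-by-region analysis of Lemma~\ref{lmRpsi}, adapted to the new geometry: $S$ has roots $-\eta$ and $1$, is positive outside $[-\eta,1]$, and has a unique interior minimum at some $\tilde x_M\in(0,1)$ by Remark~\ref{remsS}(2). I would fix $\tilde c>0$ once at the very end, small enough that all regional estimates hold simultaneously for $\eta\in(0,1)$. A useful preparatory observation is that, via (\ref{eqWT}), the coefficients $\varphi_0$ and $\varphi_m$ are rational functions of $\eta$ with denominator $1+\eta^m$, so $\Phi=K(x,\eta)/(1+\eta^m)$ for some $K\in\mathbb{R}[x,\eta]$ of $x$-degree at most $d-1$; since $1+\eta^m\geq 1$, the objects $\Phi$, $\Phi'$, $\varphi_0$, $\varphi_m$ are uniformly bounded for $\eta\in[0,1]$ on any bounded $x$-set, which is much nicer than the corresponding objects in Lemma~\ref{lmRpsi}.

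For the exterior pieces I would essentially copy parts~a) and~b) of Lemma~\ref{lmRpsi}. On $x\geq 1$, since $\tilde A\leq 1$ by (\ref{eqlimbis}), $S'(x)\geq x^{m-1}(d-m\tilde A)\geq d-m>0$, giving a linear lower bound on $[1,2]$ and a bound $S\geq \beta x^d$ on $[2,\infty)$; the ansatz $\varepsilon_1=\tilde c\eta^m(1-\eta)^2$ then makes $|\varepsilon_1\Phi|$ strictly smaller once $\tilde c$ is small. On $x\leq -\eta$, since $d$ is even and $m$ is odd, $-\tilde A x^m\geq 0$ for $x\leq 0$, hence $S\geq x^d-\tilde B$, which is pinned down near $-\eta$ by a tangent-line bound using $|S'(-\eta)|$ computed from (\ref{eqABbis}), and elsewhere by the dominance of $x^d$.

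On the interior $(-\eta,1)$ I would partition into three sub-pieces. Near $x=1$ the tangent $y=S'(1)(x-1)$ bounds $S$ from above, and its slope $S'(1)=d-m\tilde A\geq d-m>0$ is bounded below uniformly in $\eta$, so $\varepsilon_1\Phi$ lies strictly below it for $\tilde c$ small. Near $x=-\eta$ the tangent $y=S'(-\eta)(x+\eta)$ lies above $S$ on $(-\eta,\tilde x_M]$, and $|S'(-\eta)|=d\eta^{d-1}+m\tilde A\eta^{m-1}$ is at least of order $\eta^{m-1}$ for $\eta$ small and of order $1$ for $\eta$ away from $0$; meanwhile $\varepsilon_1\|\Phi'\|_\infty$ is of order $\tilde c\eta^m$ for small $\eta$, leaving a factor $\eta$ of margin. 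On the remaining middle strip around $\tilde x_M$, $|S|$ is bounded below by a positive constant independent of $\eta$, whereas $|\varepsilon_1\Phi|=O(\tilde c\eta^m(1-\eta)^2)$, so $\tilde c$ small secures $S+\varepsilon_1\Phi<0$. Combining the three sub-pieces gives $S+\varepsilon_1\Phi<0$ throughout $(-\eta,1)$.

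The main obstacle, as in Lemma~\ref{lmRpsi}, is assembling the regional estimates uniformly across both endpoints of the parameter range. The limit $\eta\to 0^+$ is where $\tilde B$ and, for $m\geq 3$, also $|S'(-\eta)|$ collapse polynomially in $\eta$ while $\Phi$ does not degenerate; the factor $\eta^m$ in $\varepsilon_1$ is calibrated precisely to keep the comparison with $\varepsilon_1\Phi$ uniform. The limit $\eta\to 1^-$ is milder since $1+\eta^m$ stays bounded away from $0$, but $\tilde A\to 0^+$ at the rate $(1-\eta)$, and the extra $(1-\eta)^2$ factor provides margin that is in fact already demanded by the prior sign conditions (\ref{eqsigns}). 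Once all the above estimates are made quantitative, choosing $\tilde c$ less than the minimum of finitely many resulting positive constants delivers (\ref{eqsignsbis}).
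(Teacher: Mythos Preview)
Your overall strategy matches the paper's: split the real line into regions and, on each, compare $S$ with $\varepsilon_1\Phi$, exploiting the useful observation (which you correctly make) that $\Phi$, $\Phi'$, $\varphi_0$, $\varphi_m$ are uniformly bounded for $\eta\in[0,1]$ because the denominator $1+\eta^m$ stays $\geq 1$. Your treatment of the exterior pieces $x\geq 1$ and $x\leq -\eta$ is essentially the paper's argument and is correct.

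The genuine gap is on the interior $(-\eta,1)$. You assert that the tangent $y=S'(1)(x-1)$ bounds $S$ from above near $x=1$, and that the tangent $y=S'(-\eta)(x+\eta)$ lies above $S$ on $(-\eta,\tilde x_M]$. But $S$ is \emph{convex} on $[-\eta,0]$ and on $[\tilde x_M,1]$ (compute $S''$; for $x<0$ both terms of $S''$ are nonnegative, and at $x=\tilde x_M$ one checks $S''(\tilde x_M)>0$), and a convex function lies \emph{above} its tangent lines, not below. So neither tangent inequality goes the way you need. Once the tangent at $-\eta$ fails to control $S$ on $(-\eta,0]$, your ``middle strip'' must absorb points near $x=0$, where $|S(0)|=\tilde B=O(\eta^m)\to 0$ as $\eta\to 0^+$; hence the claimed $\eta$-uniform positive lower bound for $|S|$ on that strip is also false.

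The repair is precisely what the paper does: replace tangents by \emph{chords}, and use the decomposition $[-\eta,0]\cup[0,\tilde x_M]\cup[\tilde x_M,1]$. On $[-\eta,0]$ the chord $\zeta$ joining $(-\eta,0)$ to $(0,-\tilde B)$ lies above $S$ by convexity, and its slope has modulus $\tilde B/\eta$ of order $\eta^{m-1}$, which beats $\varepsilon_1\|\Phi'\|_\infty=O(\tilde c\,\eta^m)$. On $[\tilde x_M,1]$ the chord $\tilde\zeta$ from $(\tilde x_M,S(\tilde x_M))$ to $(1,0)$ lies above $S$, with slope $\geq\tilde B/(1-\tilde x_M)$, again of order at least $\eta^m$ and beating $\varepsilon_1\|\Phi'\|_\infty$. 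On $[0,\tilde x_M]$ one uses monotonicity: $S\leq S(0)=-\tilde B$, and $\tilde B\geq B_{\min}\eta^m$ dominates $\varepsilon_1\|\Phi\|_\infty=O(\tilde c\,\eta^m)$ once $\tilde c$ is small.
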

  
  Contractibility of $\mathcal{K}$ follows from the lemma.
  
  \begin{proof}[Proof of Lemma~\ref{lmS}]
    All derivatives of $S$ of order $\leq d-1$ are increasing functions in $x$
    for $x\geq 1$ (see Remarks~\ref{remsS}). As

    $$S'(1)=(d+d\eta ^m-m+m\eta ^d)/(1+\eta ^m)\geq (d-m)/2~,$$
    one can choose $\tilde{c}$ small enough so that for $x\in [1,2]$,
    $S'+\varepsilon _1(\eta )\Phi '>0$. Hence $S+\varepsilon _1(\eta )\Phi >0$
    for $x\in (1,2]$. If $x\geq 2$, then for some positive constants $k_1$
    and $k_2$, one has $S'\geq k_1x^{d-1}$ and
    $\Phi '\leq k_2x^{d-2}$, so if
    $\tilde{c}>0$ is small enough, then for $x\geq 2$ (hence for $x>1$),
    $S'+\varepsilon _1(\eta )\Phi '>0$ and $S+\varepsilon _1(\eta )\Phi >0$.

    One has

    $$S'(-\eta )=-(d\eta ^{d-1}+(d-m)\eta ^{d+m-1}+m\eta ^{m-1})/(1+\eta ^m)=
    O(\eta ^{m-1})~,$$
    $S'(-\eta )<0$ and $S$ is convex for $x<0$. Hence
    one can choose $\tilde{c}>0$ so small that for $x\in [-2,-\eta ]$,
    $S'+\varepsilon _1(\eta )\Phi '<0$ hence $S+\varepsilon _1(\eta )\Phi >0$.
    Indeed, for $\eta \in [0,1]$ and $x\in [-2,0]$, $\Phi '$ is bounded. 
    For $x\leq -2$, one has

    $$S'\leq k_1^*x^{d-1}~~~\, \, {\rm and}~~~\, \, 
    |\Phi '|\leq k_2^*x^{d-2}$$
    for some positive constants $k_1^*$, $k_2^*$,
    so  $S+\varepsilon _1(\eta )\Phi <0$ 
    (thus this holds true for $x<-\eta$).

    The function $S$ is convex on $[-\eta ,0]$, see Remarks~\ref{remsS}.
    Hence for $x\in [-\eta ,0]$, the graph of $S$ is below the line $\zeta$
    joining
    the points $(-\eta ,0)$ and $(0,-\tilde{B})$. Its slope is
    $-\tilde{B}/\eta$, with
    $|-\tilde{B}/\eta |=O(\eta ^{m-1})$. Hence for $x\in [-\eta ,0]$ and for
    $\tilde{c}>0$ sufficiently small, the graph
    of $\Phi$ is above the line $\zeta$ (because $\Phi '$ is bounded for
    $x\in [-1,0]$, $\eta \in [0,1]$)
    and one has
    $S+\varepsilon _1(\eta )\Phi <0$.

    Suppose that $x\in [0,\tilde{x}_M]$. The function $S$ is decreasing, see
    Remarks~\ref{remsS}, hence $S(x)\leq S(0)=-\tilde{B}=O(\eta ^m)$.
    As there exists $k_3>0$ such that
    for $x\in [0,1]$, $|\Phi |\leq k_3$, for $\tilde{c}>0$
    sufficiently small,
    one has $S+\varepsilon _1(\eta )\Phi <0$.

    For $x\in [\tilde{x}_M,1]$, the function $S$ is convex.
    Hence its graph is below
    the line $\tilde{\zeta}$ joining the points $(\tilde{x}_M,S(\tilde{x}_M))$
    and $(1,0)$.
    Recall that $S(\tilde{x}_M)\leq S(0)=-\tilde{B}=O(\eta ^m)$.
    There exists $k_4>0$ such that
    for $x\in [0,1]$ and $\eta \in [0,1]$, $|\Phi '|\leq k_4$. 
    Thus the slope of $\tilde{\zeta}$
    is

    $$\geq \tilde{B}/(1-\tilde{x}_M)>\tilde{B}=O(\eta ^m)$$
    while
    $|\varepsilon \Phi '|\leq \tilde{c}\eta ^m(1-\eta )^2k_4$.
    Hence for sufficiently
    small values of $\tilde{c}>0$, the graph of $\varepsilon \Phi$
    is above the line
    $\tilde{\zeta}$ and $S+\varepsilon _1(\eta )\Phi <0$. 
    \end{proof}

\section{Proofs of
  Theorems~\protect\ref{tmconnecthyp} and~\protect\ref{tmconcrete}
  \protect\label{secprtmconnecthyp}}

\begin{proof}[Proof of Theorem~\ref{tmconnecthyp}]
  In the proof we assume that the polynomials of $\Pi _d$ are of the form
  $Q_{d}:=x^{d}+a_{d-1}x^{d-1}+\cdots +a_2x^2+a_1x+a_0$ and the ones of 
  $\Pi _{d-1}$ are of the form $Q_{d-1}:=x^{d-1}+a_{d-1}x^{d-2}+\cdots +a_2x+a_1$.
  Thus the intersection $\Pi _d\cap \{ a_0=0\}$ can be identified with
  $\Pi _{d-1}$.
  
  We show that every polynomial $Q_d\in \Pi _d^*$ can be
    continuously deformed so that it remains in $\Pi _d^*$, the signs of its
    coefficients do not change throughout the deformation except the one of
    $a_0$ which vanishes at the end of the deformation. Therefore
\vspace{1mm}

1) throughout the deformation the quantities of positive and negative
roots do not change;
\vspace{1mm}

2) at the end of
the deformation exactly one root vanishes and
a polynomial of the form $xQ_{d-1}$ is obtained with 
$Q_{d-1}\in \Pi _d\cap \{ a_0=0\}$.
\vspace{1mm}

Moreover, we show that throughout and at the end of
    the deformation
    one obtains polynomials with distinct real roots. Thus any given component
    of the set $\Pi _d^*$ can be retracted into a component of the set
    $\Pi _{d-1}^*$; the latter is defined by the $(d-1)$-tuple obtained from
    $\sigma$ by deleting its first component.
    For $d=2$, all components of the set $\Pi _2^*$ are contractible, see
    Example~\ref{exk2}.

    This means that for
    every given $d$ and $\sigma$, there exists exactly one component of
    $\Pi _d^*$, and 
    which is contractible. The deformation mentioned above is defined like this:

    $$Y_d:=(Q_d+txQ_d')/(1+td)=\sum _{j=0}^d((1+tj)/(1+td))a_jx^j~,~~~t\geq 0~.$$
    It is clear that the polynomial $Y_d$ is monic, with
    sign$(a_j)=$sign$((1+tj)a_j/(1+td))$ and
    $\lim _{t\rightarrow +\infty}((1+tj)a_j/(1+td))=ja_j/d$.
    There remains to prove only that $Y_d$ has $d$ distinct real roots.

    Denote the roots of $Q_d$ by
    $\eta _1<\cdots <\eta _s<0<\xi _1<\cdots <\xi _{d-s}$. The polynomial $Q_d'$
    has exactly one root in each of the intervals $(\eta _1,\eta _2)$,
    $\ldots$,
    $(\eta _{s-1},\eta _s)$, $(\eta _s,\xi _1)$,
    $(\xi _1,\xi _2)$, $\ldots$,
    $(\xi _{d-s-1},\xi _{d-s})$. We denote these roots by
    $\tau _1<\cdots <\tau _{d-1}$.

    For each $t\geq 0$,
    the polynomial $Y_d$ changes sign in each of the intervals
    $(\eta _j,\tau _j)$, $j=1$, $\ldots$, $s-1$, and in each of the
    intervals $(\tau _{s+i-1},\xi _{i})$, $i=2$, $\ldots$, $d-s$,
    so it has a root there. This makes not less than $d-2$ distinct real
    roots.

    If $\tau _s>0$ (resp. $\tau _s<0$), then $Y_d$ changes sign in each of the
    intervals
    $(\eta _s,0)$ and $(\tau _s,\xi _1)$ (resp. $(\eta _s,\tau _s)$
    and $(0,\xi _1)$), so it has two more real distinct roots.
    Hence for any~$t\geq 0$, $Y_d$ is hyperbolic, with $d$ distinct roots. 
  \end{proof}

\begin{proof}[Proof of Theorem~\ref{tmconcrete}] We remind that 
  we denote by $H^k_{\pm}$ not only the graphs mentioned in
  Theorem~\ref{tmremind}, but also the corresponding functions.
  \vspace{1mm}
  
  A) We prove Theorem~\ref{tmconcrete} by induction on $d$.
  The induction base are the cases $d=2$ and $d=3$, see
  Examples~\ref{exk2} and~\ref{exk3}. 

  Suppose that Theorem~\ref{tmconcrete}
  holds true for $d=d_0\geq 3$. Set
  $d:=d_0+1$. As in the proof of Theorem~\ref{tmconnecthyp} we set
  $Q_{d}:=x^{d}+a_{d-1}x^{d-1}+\cdots +a_2x^2+a_1x+a_0$ and
  $Q_{d-1}:=x^{d-1}+a_{d-1}x^{d-2}+\cdots +a_2x+a_1$, so that the
  intersection $\Pi _d\cap \{ a_0=0\}$ can be identified with
$\Pi _{d-1}$.
\vspace{1mm}

B) We remind that any stratum (or component) $U$ of $\Pi _{d-1}^*$ is
of the form (see Notation~\ref{notaPid} and Remark~\ref{remstrata})

$$U=S_{1^{d-1}}(\sigma _1,\ldots ,\sigma _{d-1})=
\{ a'\in S_{1^{d-1}}~|~{\rm sign}(a_j)=\sigma _j,
1\leq j\leq d-1\} ~.$$
Starting with such a component $U$ (hence $U=U^{d-1}$),
we construct
in several steps the
components $U_+$ and $U_-$ of the set $\Pi _d^*$ sharing with $U$ the signs
of the coefficients $a_{d-1}$, $\ldots$, $a_1$. One has $a_0>0$ in $U_+$ and
$a_0<0$ in $U_-$.

At the first step we construct the sets $U_{1,\pm }$ as
follows. We remind that the projections $\pi ^k$ and their fibres $\tilde{f}_k$
were defined in part (1) of
Theorem~\ref{tmremind}. Each fibre $\tilde{f}_d$ of the projection $\pi ^d$
which is over a
point of $U$
is a segment, see part (1) of Theorem~\ref{tmremind}. If $Q_{d-1}\in U$, then
for $\varepsilon >0$ small enough, both polynomials $xQ_{d-1}\pm \varepsilon$
are hyperbolic. Indeed, all roots of $Q_{d-1}$ are real and simple.
The set $U_{1,+}$ (resp. $U_{1,-}$) is the union of the
interior points of these fibres $\tilde{f}_d$ which are with positive
(resp. with negative)
$a_0$-coordinates. Thus

$$\begin{array}{ll}
  U_{1,+}=\{ a\in \tilde{f}_d~|~a'\in U,~0<a_0<H^d_+(a')\} &{\rm and}\\ \\
   U_{1,-}=\{ a\in \tilde{f}_d~|~a'\in U,~H^d_-(a')<a_0<0\} ~,&\end{array}$$
see part (6) of
Theorem~\ref{tmremind}).
Hence the sets $U_{1,\pm }$ are open, non-empty and
contractible.

For $d\geq 2$, the intersection $\Pi _d\cap \{ a_0=0\}$ is strictly included
in the projection $\Pi _d^{d-1}$ of $\Pi _d$ in $Oa_1\cdots a_{d-1}$. Therefore
one can expect that the sets $U_{1,\pm}$ are not the whole of two components
of $\Pi _d^*$. We construct contractible sets
$U_{1,\pm}\subset U_{2,\pm}\subset \cdots \subset U_{d-1,\pm}$, where
for $1\leq j\leq d-1$, the signs of the coordinates $a_j$
of each point of $U_{k,+}$ (resp. $U_{k,-}$)
are defined by $\sigma$, and $U_{d-1,\pm}$ are components of $\Pi _d^*$.
One has $a_0>0$ in $U_{k,+}$ and $a_0<0$ in $U_{k,-}$.

\vspace{1mm}

C) Recall that the set $U$ consists of all the points between the graphs
$L^{d-1}_{\pm}$ of
two continuous functions defined on $U^{d-2}$:

$$U=\{ a'~|~L^{d-1}_-(a'')<a''<L^{d-1}_+(a''),~a''\in U^{d-2}\} ~,$$
see Notation~\ref{notaPid}. 
Thus
$(L^{d-1}_+\cup L^{d-1}_-)\subset \partial U$. Depending on the sign of $a_1$
in $U$, for each of these graphs, part or the whole of it could belong to
the hyperplane $a_1=0$.

Consider a fibre $\tilde{f}_d$ over a point of one of the graphs
$L^{d-1}_{\pm}$ and not belonging to the hyperplane $a_1=0$.
A priori the two endpoints of the fibre cannot have
$a_0$-coordinates with opposite signs. Indeed, if this were the case for
the fibre over $a'={a^*}'$ (see Notation~\ref{notaPid}), then
for all fibres over $a'$ close to ${a^*}'$, these signs would also
be opposite, because the functions $L^d_{\pm}$, whose values are the values of
the $a_0$-coordinates of the endpoints, are continuous. Hence all these
fibres $\tilde{f}_d$ intersect the hyperplane $a_0=0$
(see part (1) of Theorem~\ref{tmremind}),
but not the hyperplane $a_1=0$. Hence the point ${a^*}'$ is an interior point
of $\Pi _d$ (hence of $U$ as well) and not a point of $\partial U$
which is a contradiction, see
part (2) of Theorem~\ref{tmremind}.

Both endpoints cannot
have non-zero coordinates of the same sign, because
then in the same way the fibres $\tilde{f}_d$ over all points
$a'$ close to ${a^*}'$ would not intersect the hyperplane $a_0=0$ hence
${a^*}'\not\in \overline{U}$, so ${a^*}'\not\in \partial U$.

Hence the following three possibilities remain:
\vspace{1mm}

a) both endpoints have zero $a_0$-coordinates;
\vspace{1mm}

b) one endpoint has a zero and the other endpoint has a positive
$a_0$-coordinate;
\vspace{1mm}

c) one endpoint has a zero and the other endpoint has a negative
$a_0$-coordinate.
\vspace{1mm}

D) Consider the points of the graph $L^{d-1}_+$
which do not belong to the
hyperplane $a_1=0$ (for $L^{d-1}_-$ the reasoning is similar). If for
$B\in (L^{d-1}_+\setminus \{ a_1=0\})$, possibility
a) takes place, then there is nothing to do.

Suppose that possibility b) takes place. Denote by $a_{j,B}$ the coordinates
of the point $B$ (hence $a_{0,B}=0$). For each such point $B$,
fix the coordinates
$a_j=a_{j,B}$ for $j\neq 1$ and increase $a_1$. The interior points of the
corresponding fibres $\tilde{f}_d$ (when non-void) have the same signs of their
$a_0$-coordinates, hence these signs are positive. 
Then for some $a_1=a_{1,C}>a_{1,B}$, one has either $a_{1,C}=0$ (this can happen
only when $a_{1,B}<0$) or the point
$C$ belongs to the graph $H^{d-1}_+$ and for $a_1>a_{1,C}$, the fibres
$\tilde{f}_d$ are void, see Theorem~\ref{tmremind}.

In both these situations
we add to the set
$U_{1,+}$ the points of the interior of all fibres $\tilde{f}_d$ over the
interval $[a_{1,B},a_{1,C})$ (with $a_j=a_{j,B}$ for $j\neq 1$), over all points
  $B\in (L^{d-1}_+\setminus \{ a_1=0\} )$.

  If possibility c) takes place, then we fix again $a_{j,B}$ for $j\neq 1$
  and increase $a_1$. The interior points of the
  corresponding fibres $\tilde{f}_d$ (when non-void) have negative sign
  of their $a_0$-coordinates.
  We add to the set $U_{1,-}$ the interior points 
  of all fibres $\tilde{f}_d$ over the
interval $[a_{1,B},a_{1,C})$ (with $a_j=a_{j,B}$ for $j\neq 1$), over all points
  $B\in (L^{d-1}_+\setminus \{ a_1=0\} )$.
  \vspace{1mm}
  
  E) We perform a similar reasoning and construction with $L^{d-1}_-$
  (in which the role of $H^{d-1}_+$ is played by $H^{d-1}_-$). In this case
  $a_1$ is to be decreased, one
  has $a_{1,C}<a_{1,B}$ and the interval $[a_{1,B},a_{1,C})$ is to be replaced
    by the interval $(a_{1,C},a_{1,B}]$.
  \vspace{1mm}
  
  F) Thus we have
  enlarged the sets $U_{1,\pm}$; the new sets are denoted by $U_{2,\pm}$:

  $$\begin{array}{lll}
    U_{2,+}&=&U_{1,+}\cup \{ a\in \Pi ^*_d~|~a''\in U^{d-2},~
    L_+^{d-1}(a'')<a_1<H^{d-1}_+(a''),\\ \\ &&{\rm if}~L_+^{d-1}(a'')\geq 0,~
    L_+^{d-1}(a'')<a_1<{\rm min}(0,H^{d-1}_+(a'')),~{\rm if}~L_+^{d-1}(a'')<0~\} ~,
    \\ \\
    U_{2,-}&=&U_{1,-}\cup \{ a\in \Pi ^*_d~|~a''\in U^{d-2},~
    H^{d-1}_-(a'')<a_1<L_-^{d-1}(a''),\\ \\ &&{\rm if}~L_-^{d-1}(a'')\leq 0,~
    {\rm max}(0,H^{d-1}_-(a''))<a_1<L_-^{d-1}(a''),~{\rm if}~L_-^{d-1}(a'')>0~\} ~.
    \end{array}$$

  The
  sets $U_{1,\pm}$ and $U_{2,\pm}$ satisfy the conclusion of
  Theorem~\ref{tmconcrete}. We denote the graphs $L^k_{\pm}$ defined for
  the sets $U_{1,\pm}$ and $U_{2,\pm}$ by $L^k_{1,\pm}$ and $L^k_{2,\pm}$.
  The construction of these graphs implies that they are graphs of continuous
  functions (because such are the graphs $H^k_{\pm}$). 
  The set $U_{1,+}\cup U_{1,-}$ 
  (resp. $U_{2,+}\cup U_{2,-}$) contains all points of the set
  $(\pi ^d)^{-1}(U)\cap \Pi _{d,\sigma}^*$ (resp.
  $(\pi ^{d-1}\circ \pi ^d)^{-1}(U^{d-2})\cap \Pi _{d,\sigma}^*$).
  
  G) We remind that $\tilde{f}_d=f^{\diamond}_{d-1}$, see Remark~\ref{remfibre}.
  Suppose that the sets $U_{s,\pm}$, $2\leq s\leq d-3$,
  are constructed such that they satisfy the
  conclusion of Theorem~\ref{tmconcrete} (the graphs $L^k_{\pm}$
  are denoted by $L^k_{s,\pm}$) and that the set
  $U_{s,+}\cup U_{s,-}$ contains all points of the set
  $(\pi ^{d-s+1}\circ \cdots \circ \pi ^d)^{-1}(U^{d-s})\cap \Pi _{d,\sigma}^*$. 

  Consider a point $D\in L^{d-s}_+$ which
  does not belong to the hyperplane $a_s=0$. For the fibre $f^{\diamond}_{d-s}$
  of the projection $\pi ^{d-s+1}\circ \cdots \circ \pi ^d$ which is over 
  $D$ (see Remark~\ref{remfibre}) one of the three possibilities takes place:
  \vspace{1mm}
  
  a') the minimal and the maximal possible value of the $a_s$-coordinate of
  the points of the fibre are zero;
  \vspace{1mm}
  
  b') the minimal possible value is $0$ and the maximal possible value is
  positive;
  \vspace{1mm}
  
  c') the minimal possible value is negative and the maximal possible value
  is~$0$.
  \vspace{1mm}

  It is not possible to have both the maximal and minimal possible value
  of $a_s$ non-zero, because in this case the point $D$ does not belong to
  the set $\partial U^{d-s}$. This is proved by analogy with~C). With regard to
  Remark~\ref{remfibre}, when the fibre $f^{\diamond}_{d-s}$ is not a point, then
  the maximal (resp. the minimal) value of $a_s$ is attained at one of
  the $0$-dimensional cells (resp. at the other $0$-dimensional cell) and
  only there. This can be deduced from part (2) of Theorem~\ref{tmremind}. 
  \vspace{1mm}
  
  H) When possibility a') takes place, then there is nothing to do. Suppose
  that possibility b') takes place. Denote by $a_{j,D}$ the coordinates of the
  point $D$ (hence $a_{0,D}=\cdots =a_{s-1,D}=0$). Fix $a_{j,D}$ for $j\neq s$
  and increase $a_s$.
  Then for some $a_s=a_{s,E}>a_{s,D}$, one has either $a_{s,E}=0$ (which is
  possible only if $a_{s,D}<0$) 
  or the point $E$ belongs
  to the graph $H^{d-s}_+$. In this case we add to the set $U_{s,+}$ the points
  of the interior of all fibres $f^{\diamond}_{d-s}$ over the interval
  $[a_{s,D},a_{s,E})$ (with $a_j=a_{j,D}$ for $j\neq s$), over all points
    $D\in (L^{d-s}_+\setminus \{ a_s=0\} )$. The $a_{s-1}$-coordinates of
    all points thus added are positive. 

    If possibility c') takes place, then we fix again $a_{j,D}$ for $j\neq s$
  and increase $a_s$. We add to the set $U_{s,-}$ the points of the interior
  of all fibres $f^{\diamond}_{d-s}$ over the
interval $[a_{s,D},a_{s,E})$ (with $a_j=a_{j,D}$ for $j\neq s$), over all points
  $D\in L^{d-s}_+\setminus \{ a_s=0\}$. The $a_{s-1}$-coordinates of
    all points thus added are negative.

We consider in a similar way the graph $L^{d-s}_-$
in which case the role of $H^{d-s}_+$ is played by $H^{d-s}_-$, $a_s$ is
to be decreased, one
  has $a_{s,E}<a_{s,D}$ and the interval $[a_{s,D},a_{s,E})$ is to be replaced
    by the interval $(a_{s,E},a_{s,D}]$.
    \vspace{1mm}

    I) We have thus constructed the sets $U_{s+1,\pm}$ which satisfy
    the conclusion of Theorem~\ref{tmconcrete}:
    
$$\begin{array}{lll}
    U_{s+1,+}&=&U_{s,+}\cup \{ a\in \Pi ^*_d~|~a^{(s+1)}\in U^{d-s-1},\\ \\ &&
    L_{s,+}^{d-s}(a^{(s+1)})<a_s<H^{d-s}_+(a^{(s+1)}),~
    {\rm if}~L_{s,+}^{d-s}(a^{(s+1)})\geq 0,\\ \\ &&
    L_{s,+}^{d-s}(a^{(s+1)})<a_s<{\rm min}(0,H^{d-s}_+(a^{(s+1)})),~{\rm if}~L_{s,+}^{d-s}(a^{(s+1)})<0~\} ~,\\ \\
    U_{s+1,-}&=&U_{s,-}\cup \{ a\in \Pi ^*_d~|~a^{(s+1)}\in U^{d-s-1},\\ \\ &&
    H^{d-s}_-(a^{(s+1)})<a_s<L_{s,-}^{d-s}(a^{(s+1)}),~{\rm if}~
    L_{s,-}^{d-s}(a^{(s+1)})\leq 0,\\ \\ &&
    {\rm max}(0,H^{d-s}_-(a^{(s+1)}))<a_s<L_{s,-}^{d-s}(a^{(s+1)}),~{\rm if}~L_{s,-}^{d-s}(a^{(s+1)})>0~\} ~.
    \end{array}$$
    The set
    $U_{s+1,+}\cup U_{s+1,-}$ contains all points of the set
    $(\pi ^{d-s}\circ \cdots \circ \pi ^d)^{-1}(U^{d-s-1})\cap \Pi _{d,\sigma}^*$. 
    It should be noticed that as the fibres
    $f^{\diamond}_{d-s}$ contain cells of dimension from $0$ to $s$, all graphs
    $L_{s,\pm}^k$ would have to be changed when passing from
    $L_{s,\pm}^k$ to $L_{s+1,\pm}^k$. The new
    graphs are graphs of continuous functions; this follows from the
    construction and from the fact that such are the graphs $H^k_{\pm}$.
    \vspace{1mm}

    J) One can construct the sets $U_{d-1,\pm}$ in a similar way. The only
    difference is the fact that there is a graph $H^2_+$, but not a graph
    $H^2_-$, see Example~\ref{exk2}:

$$\begin{array}{lll}
    U_{d-1,+}&=&U_{d-2,+}\cup \{ a\in \Pi ^*_d~|~a^{(d-1)}\in U^{1},\\ \\ &&
    L_{d-2,+}^{2}(a^{(d-1)})<a_{d-2}<H^{2}_+(a^{(d-1)}),~
    {\rm if}~L_{d-2,+}^{2}(a^{(d-1)})\geq 0,\\ \\ &&
    L_{d-2,+}^{2}(a^{(d-1)})<a_{d-2}<{\rm min}(0,H^{2}_+(a^{(d-1)})),~{\rm if}~L_{d-2,+}^{2}(a^{(d-1)})<0~\} ~,\\ \\
    U_{d-1,-}&=&U_{d-2,-}\cup \{ a\in \Pi ^*_d~|~a^{(d-1)}\in U^{1},\\ \\ &&
    a_{d-2}<L_{d-2,-}^{2}(a^{(d-1)}),~{\rm if}~
    L_{d-2,-}^{2}(a^{(d-1)})\leq 0,\\ \\ &&
    0<a_{d-2}<L_{d-2,-}^{2}(a^{(d-1)}),~{\rm if}~L_{d-2,-}^{2}(a^{(d-1)})>0~\} ~.
    \end{array}$$
    We set $U_{\pm}:=U_{d-1,\pm}$. The set
    $U_+\cup U_-$ contains all points from the set
    $(\pi ^2\circ \cdots \circ \pi ^d)^{-1}(U^1)\cap \Pi _{d,\sigma}^*$. 
    The sets $U_{\pm}$ satisfy the conclusion of
    Theorem~\ref{tmconcrete}. Hence they are contractible.
    \vspace{1mm}

    K) The functions $L^k_{\pm}$ encountered throughout
    the proof of the theorem  
    can be extended by continuity on the closures of the sets on which they
    are defined, because this is the case of the functions $H^k_{\pm}$.
    Moreover, fibres $\tilde{f}_k$ which are points appear only
    in case they are over points of the graphs $H^{k-1}_{\pm}$. Hence this
    describes the only possibility for the values of the functions
    $L^k_{\pm}$ to coincide.
\end{proof}

\section{Comments and open problems\protect\label{seccomments}}

One could try to
generalize Theorem~\ref{tmconnecthyp} by considering instead of the
set $\Pi _d^*$ the set $R_{3,d}$, i.~e. by dropping the requirement
the polynomial
$Q_d$ to be hyperbolic. So an open problem can be formulated
like this:

\begin{op}\label{op1}
  For a given degree $d$, consider the triples $(\sigma ,~pos,~neg)$
  compatible with Descartes' rule of
  signs. Is it true that for each such triple, the corresponding subset of the
  set $R_{3,d}$ is either contractible or empty?
\end{op}

The difference between this open problem and Theorem~\ref{tmconnecthyp} is
the necessity to check whether the subset is empty or not (see part (3)
of Theorem~\ref{tm2roots}). For instance,
if $d=4$, then for neither of the triples

$$((+,-,+,+),~2,~0)~~~\, \, {\rm and}~~~\, \, ((-,-,-,+),~0,~2)$$
(both compatible with Descartes' rule of signs) does there exist a polynoial
$x^4+a_3x^3+a_2x^2+a_1x+a_0$ with signs of the coefficients $a_j$ as defined
by $\sigma$ and with $2$ positive and $0$ negative or with $0$ positive and $2$
negative roots respectively, see~\cite{Gr} (all roots are assumed to be simple).

The question of realizability of triples $(\sigma ,~pos,~neg)$ has
been asked in~\cite{AJS}. The exhaustive answer to this question is known
for $d\leq 8$. For $d=4$, it is due to D.~Grabiner (\cite{Gr}), for $d=5$ and
$6$, to A.~Albouy and Y.~Fu (\cite{AlFu}), for $d=7$ and partially for $d=8$,
to J.~Forsg{\aa}rd, V.~P.~Kostov and B.~Shapiro (\cite{FoKoSh} and
\cite{FoKoSh1})
and for $d=8$ the result was completed in~\cite{KoCzMJ}. Other results in
this direction can be found in \cite{CGK}, \cite{CGK2} and~\cite{KoMB}.

\begin{rems}
  {\rm (1) It is not easy to imagine how one could prove that all
    components of $R_{3,d}$ are either contractible or
    empty 
    without giving an exhaustive answer to the question which triples
    $(\sigma ,~pos,~neg)$ are realizable and which are not. Unfortunately,
    at present, giving such an answer for any degree $d$ is out of reach.

    (2) If one can prove not contractibility of the non-empty
    components, but only that they are (simply) connected, would
    also be of interest.}
  \end{rems}

For a degree $d$ univariate real monic polynomial $Q_d$ without
vanishing coefficients, one can define
the couples $(pos_{\ell},~neg_{\ell})$ of the
numbers of positive and negative roots
of $Q_d^{(\ell )}$, $\ell =0$, $1$, $\ldots$, $d-1$. One can observe that
the $d$ couples $(pos_{\ell},~neg_{\ell})$ define the signs of the coefficients
of $Q_d$ and that their choice must be compatible not only with Descartes'
rule of
signs, but also with Rolle's theorem. We call such $d$-tuples of couples
{\em compatible} for short. We assume that for
$\ell =0$, $1$, $\ldots$, $d-1$, all
real roots of $Q_d^{(\ell )}$ are simple and non-zero. 

To have a geometric
idea of the situation we define the discriminant sets $\tilde{\Delta}_j$,
$j=1$, $\ldots$, $d$ as the sets $\Delta _j$ defined in the spaces 
$Oa_{d-j}\ldots a_{d-1}$ for the polynomials $Q_d^{(d-j)}$. In particular,
$\tilde{\Delta}_d=\Delta _d$. For $j=1$, $\ldots$, $d-1$, we set
$\Delta _j:=\tilde{\Delta}_j\times Oa_0\ldots a_{d-j-1}$. We define the
set $R_{4,d}$ as

$$R_{4,d}:=\mathbb{R}^d\setminus \left( (\cup _{j=1}^d\Delta _j)\cup
(\cup _{j=0}^{d-1}\{ a_j=0\} )\right) ~.$$ 
For $d\leq 5$, the question when a subset of $R_{4,d}$ defined by a given
compatible $d$-tuple of couples $(pos_{\ell},~neg_{\ell})$ is empty is considered
in~\cite{CGK1}.

\begin{op}
  Given the $d$ compatible couples $(pos_{\ell},~neg_{\ell})$, is it true that
  the subset of $R_{4,d}$ defined by them is either connected (eventually
  contractible) or empty? In other words, is it true that each $d$-tuple of
  such couples defines 
  either exactly one or none of the components of the set $R_{4,d}$?
\end{op}

Some problems connected with comparing the moduli of the positive and
negative roots of hyperbolic polynomials are treated in \cite{KoPuMaDe},
\cite{KorigMO} and~\cite{KoSe}. Other problems concerning hyperbolic
polynomials are to be found in~\cite{Ko}. A tropical analog of
Descartes' rule of signs is discussed in~\cite{FoNoSh}.
\vspace{1mm}

{\bf Acknowledgement.} B. Z. Shapiro from the University of Stockholm
attracted the author's attention to Open Problem~\ref{op1} and
suggested the proof of part (1) of Theorem~\ref{tm2roots}. The remarks
of the anonymous referee allowed to improve the clarity of the proofs
of the theorems.

\end{document}